%
\documentclass[12 pt]{amsart}

\synctex=1
\usepackage{hyperref}
\usepackage{etex}
\usepackage[shortlabels]{enumitem}
\usepackage{amsmath}
\usepackage{amsxtra}
\usepackage{amscd}
\usepackage{amsthm}
\usepackage{amsfonts}
\usepackage{amssymb}
\usepackage{eucal}
\usepackage[all]{xy}
\usepackage{graphicx}
\usepackage{tikz-cd}
\usepackage{mathrsfs}
\usepackage{subfiles}
\usepackage{mathpazo}
\usepackage[colorinlistoftodos, textsize=tiny]{todonotes}
\usepackage{morefloats}
\usepackage{pdfpages}
\usepackage{thm-restate}
\usepackage[utf8]{inputenc}
\usepackage{epigraph}

\graphicspath{ {images/} }

\RequirePackage{color}
\definecolor{myred}{rgb}{0.75,0,0}
\definecolor{mygreen}{rgb}{0,0.5,0}
\definecolor{myblue}{rgb}{0,0,0.65}

\usepackage{hyperref}
\hypersetup{citecolor=blue}
\usepackage{tikz}
\usetikzlibrary{matrix,arrows,decorations.pathmorphing}

\theoremstyle{plain}
\newtheorem{theorem}[subsection]{Theorem}

\newtheorem{proposition}[subsection]{Proposition}
\newtheorem{lemma}[subsection]{Lemma}

\theoremstyle{definition}
\newtheorem{definition}[subsection]{Definition}
\newtheorem{remark}[subsection]{Remark}

\theoremstyle{remark}
\newtheorem{notation}[subsection]{Notation}

\numberwithin{equation}{section}

\newcommand\nc{\newcommand}
\nc\on{\operatorname}
\nc\renc{\renewcommand}

\newcommand\bp{{\mathbb P}}

\newcommand\bz{{\mathbb Z}}

\newcommand\sce{\mathscr E}
\newcommand\scf{\mathscr F}
\newcommand\scg{\mathscr G}
\newcommand\sch{\mathscr H}
\newcommand\sci{\mathscr I}

\newcommand\sck{\mathscr K}
\newcommand\scl{\mathscr L}
\newcommand\scm{\mathscr M}

\newcommand\sco{\mathscr O}
\newcommand\scp{\mathscr P}
\newcommand\scq{\mathscr Q}

\newcommand\scv{\mathscr V}

\newcommand \ra{\rightarrow}

\DeclareMathOperator\spec{\text{Spec }}

\newcommand \mg{{\mathscr M_g}}

\DeclareMathOperator\supp{Supp}

\DeclareMathOperator\pgl{PGL}

\DeclareMathOperator\isom{Isom}
\DeclareMathOperator\locfreelocus{\scf}

\setcounter{MaxMatrixCols}{20}

\def\listtodoname{List of Todos}
\def\listoftodos{\@starttoc{tdo}\listtodoname}

\title{The locus of plane curves in the moduli stack of curves}
\author{Aaron Landesman}

\begin{document}

\maketitle

\begin{abstract}
	Let $d \geq 4$ and let
	$U_d$ denote the locus of smooth curves
	in the Hilbert
	scheme of degree $d$ plane curves.
	If the members of $U_d$ have genus $g$,
	let $\mg$ denote the moduli stack of genus $g$ curves.
	We show that
	the natural map $[U_d/\pgl_3] \ra \mg$
	is a locally closed embedding.
\end{abstract}

\section{Introduction}

Our main goal in this paper is to show that various natural definitions of the stack of
plane curves agree.
On the one hand, one might wish to define it as the substack of $\mg$, the
moduli stack of smooth proper geometrically connected genus $g$ curves, whose
geometric points are plane curves.
On the other hand,
there is a Hilbert scheme 
of degree $d$ curves in $\mathbb P^2$. Let $U_d$ be the smooth locus of this
Hilbert scheme. This has
an action of $\pgl_3 \simeq \on{Aut}(\mathbb P^2)$.
Then, one might wish to define the stack of smooth plane curves to be $[U_d/\pgl_3]$.
One can use the universal curve over $U_d$ to construct a map $[U_d/\pgl_3] \to
\mg$, with $g = \binom{d-1}{2}$.
Our main result is that the two candidate definitions above actually agree.

\begin{theorem}
	\label{theorem:plane-intro}
	For $d \geq 4$ and $g = \binom{d-1}{2}$, the map $[U_d/\pgl_3] \to \mg$ is a locally closed
	embedding of stacks.
\end{theorem}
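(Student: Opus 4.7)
The plan is to verify four properties of $f: [U_d/\pgl_3] \to \mg$ that together imply $f$ is a locally closed embedding: $f$ is representable, a monomorphism, unramified, and has locally closed image. Representability is immediate: at a geometric point $x = [C \subset \bp^2]$, the stabilizer in $[U_d/\pgl_3]$ is $\on{Stab}_{\pgl_3}(C)$, and the natural map $\on{Stab}_{\pgl_3}(C) \to \on{Aut}(C)$ is injective because any projective transformation fixing $C$ pointwise must be the identity of $\bp^2$ (as $C$ is projectively non-degenerate for $d \geq 2$).

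For the monomorphism property, I would invoke the classical theorem (due to M.\ Noether and others) that for $d \geq 4$ the hyperplane bundle $\sco_C(1)$ on a smooth plane curve $C \subset \bp^2$ of degree $d$ is the unique line bundle of degree $d$ on $C$ with $h^0 \geq 3$. Fiberwise, this forces any abstract isomorphism of smooth degree-$d$ plane curves to intertwine the two hyperplane bundles and hence to come from $\pgl_3$. The main technical step is to upgrade this pointwise statement to a relative one: given two families of smooth degree-$d$ plane curves over a base $T$ and a $T$-isomorphism between them, the isomorphism must be induced by an element of $\pgl_3(T)$ after suitable Brauer--Severi twisting.

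For unramifiedness, I would examine the tangent map at $[C \subset \bp^2]$. The tangent space of $[U_d/\pgl_3]$ there is $H^0(C, N_{C/\bp^2})/\on{image}(\mathfrak{pgl}_3)$, mapping to $T_{[C]}\mg = H^1(C, T_C)$ via the connecting map of the normal bundle sequence $0 \to T_C \to T_{\bp^2}|_C \to N_{C/\bp^2} \to 0$. The kernel of $H^0(N_{C/\bp^2}) \to H^1(T_C)$ is the image of $H^0(T_{\bp^2}|_C)$, so the tangent map is injective if and only if $\mathfrak{pgl}_3 \twoheadrightarrow H^0(T_{\bp^2}|_C)$ is surjective. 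Via the exact sequence $0 \to T_{\bp^2}(-d) \to T_{\bp^2} \to T_{\bp^2}|_C \to 0$, this in turn reduces to $H^1(\bp^2, T_{\bp^2}(-d)) = 0$, which for $d \geq 4$ follows from a short Euler-sequence and Serre-duality computation.

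For local closedness of the image, I would use the relative Picard scheme of the universal curve $\mathcal{C} \to \mg$. The sublocus $Z \subset \on{Pic}^d(\mathcal{C}/\mg)$ cut out by $L^{\otimes(d-3)} \cong K_{\mathcal{C}/\mg}$ is closed and finite étale over $\mg$ (a torsor for the $(d-3)$-torsion of the relative Jacobian). Inside $Z$, the closed locus $h^0(L) \geq 3$ intersected with the open locus where $|L|$ gives a smooth degree-$d$ embedding into a Brauer--Severi surface is a locally closed substack; its image under the finite map $Z \to \mg$ is locally closed and coincides with the image of $f$. The main obstacle, as noted above, is the relative version of Noether's uniqueness theorem for the $g^2_d$, without which the monomorphism property cannot be verified over arbitrary bases.
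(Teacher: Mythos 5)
Your overall architecture has a genuine gap: the four properties you propose to verify --- representability, monomorphism, unramifiedness, and locally closed image --- do \emph{not} together imply that a morphism is a locally closed embedding. The standard counterexample is the normalization of a nodal curve with one of the two preimages of the node removed: this map is an unramified monomorphism (fibers are single reduced points, so EGA IV 17.2.6 applies), its image is the whole target and hence closed, yet it is not an immersion because it fails to be an isomorphism onto its scheme-theoretic image (equivalently, it is not flat at the remaining preimage of the node). To close this gap you would need something extra --- e.g.\ flatness onto the image (an \'etale monomorphism is an open immersion) or normality of the image together with Zariski's main theorem --- none of which you establish. The paper avoids the issue entirely by using Mochizuki's valuative criterion: a finite-type monomorphism is an immersion if and only if every map $\spec R \to Y$ from a DVR whose generic \emph{and} closed points both lift admits a (unique) lift $\spec R \to X$. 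That closed-point condition is exactly what fails in the nodal-curve example, and the paper verifies it by a concrete construction (take the closure $D_R$ of a divisor representing $\scl_K$, then use semicontinuity, the uniqueness of the $g^2_d$ on the special fiber, and Grauert's theorem to see $\sco_{C_R}(D_R)$ does the job).

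Two further points. First, the obstacle you flag yourself --- upgrading Noether's uniqueness of the $g^2_d$ to a statement over arbitrary bases --- is real but is resolved in the paper without any relative argument: after pulling back to an \'etale cover $T \to \mg$ with a section of the universal curve, the fiber product $T \times_{\mg} \scp_d$ is an open subscheme of the scheme $\scg^2_d(p)$, and EGA IV 17.2.6 reduces the monomorphism property to showing each geometric fiber is empty or a single \emph{reduced} point. Uniqueness alone gives a single point; reducedness requires in addition the surjectivity of the multiplication map $H^0(C,\sco_C(1)) \otimes H^0(C, K_C(-1)) \to H^0(C,K_C)$ (proved via projective normality), which your sketch omits but which is essential even for the monomorphism step. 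Second, since the statement is over $\spec \bz$, your appeals to the classical characteristic-zero theory need care: Noether's uniqueness theorem must be reproved in characteristic $p$ (the paper factors any inseparable $g^1_m$ through Frobenius), and your claim that the locus $L^{\otimes(d-3)} \cong K$ in $\on{Pic}^d$ is finite \emph{\'etale} fails when $p \mid d-3$; moreover the image of a locally closed set under a finite morphism need not be locally closed, so even that step of your image argument is incomplete.
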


We prove this by defining an intermediate stack
$\scp_d$ and
first identifying $[U_d/\pgl_3]$ with $\scp_d$ in
\autoref{proposition:plane-curves-to-pd}.
We then show the natural map $\scp_d \to \mg$ is a locally closed embedding in
\autoref{theorem:locally-closed-embedding}.

\begin{remark}
	\label{remark:}
	In particular, we obtain that the 
	geometric points of $\mg$ corresponding to plane curves underlie a smooth
	locally closed substack of $\mg$ of dimension $\binom{d+2}{2}-9$.
\end{remark}
\begin{remark}
	\label{remark:}
The reason we restrict to the case $d \geq 4$, is that, when $d < 4$, plane curves of degree
$d$ will have genus at most $1$, so the moduli stack of curves $\mg$ will not be
Deligne-Mumford.
\end{remark}

\subsection{Idea of the proof}
The idea for our proof that $\scp_d \to \mg$ is a locally closed embedding is to first show that $\scp_d$ can be identified as an open
subfunctor of $\scg^2_d$, which parameterizes curves together with a $g^2_d$.
Known facts about $\scg^2_d$ then imply $\scp_d \to \mg$ is representable by
schemes.
To prove $\scp_d \to \mg$ is a locally closed embedding, we can then use
Mochizuki's valuative criterion for a map to be a locally closed embedding,
which reduces us to verifying the above map is a monomorphism and satisfies a
certain valuative criterion. The main difficult is to verify it is a
monomorphism.
We do this using classical
facts about $g^2_d$'s over algebraically closed fields. Some care is needed to
deal with the case of positive characteristic.
We then verify the valuative criterion via a
moduli theoretic argument.

\subsection{Outline}
The outline of this paper is as follows.
We first prove some classical facts about line bundles on
plane curves in \autoref{section:classical-facts}.
Next, we define the stack of plane curves in \autoref{section:moduli-of-plane-curves}
and relate it to two other stacks parameterizing plane curves,
$[U_d/\pgl_3]$ and $\scg^2_d(p)$.
Finally, we show that the natural map $\scp_d \ra \mg$
is a locally closed embedding in \autoref{section:locally-closed-embedding}.

\begin{remark}
	\label{remark:}
	The results of this article have already been used in a number of places. They
have come up in studying the Chow rings of moduli
spaces of curves 
\cite{canningL:on-the-chow-and-cohomology-rings}
as well as arithmetic statistics questions related to plane
curves \cite{janbaziS:finiteness}.
The results of this article have also been used any time 
one wants to view the locus of plane curves as
a substack of the moduli
stack of curves.
There are a number of papers calling $[U_d/\pgl_3]$ the moduli stack of
plane curves. See, for example
\cite[p.\ 51]{shepherd1988rationality}, \cite[p.\ 1]{hacking2004compact}, and \cite{bohning2010rationality}.
The main goal of this paper is to identify this moduli stack with the locus in
$\mg$ parameterizing plane curves.
\end{remark}

All stacks, unless otherwise specified are defined
over $\spec \bz$.

\section{Classical Facts about plane curves}
\label{section:classical-facts}

In this section, we recall some classical facts regarding plane curves over an algebraically closed field.
The main result of this section is \autoref{proposition:reduced-point}, which states that
a smooth plane curve has a unique $g^2_d$, and that $g^2_d$
corresponds to a reduced point in the scheme parameterizing $g^2_d$'s.
We will need this later to test a certain map of stacks is an isomorphism, by testing it on geometric points.

Many of the results of this section can be found in the exercises
\cite[Appendix A, Exercises 17 and 18]{ACGH:I}, and we include proofs for completeness.
We note that the results of this section hold over fields
of arbitrary characteristic (as we prove) even though
\cite[Appendix A, Exercises 17 and 18]{ACGH:I} typically
has the hypothesis that the field has characteristic $0$.
This independence of characteristic is crucial for defining
our stacks over $\spec \bz$ (instead of
over a field of characteristic $0$).
	
We begin with some standard definitions.
\begin{definition}
	\label{definition:}
	Let $k$ be an algebraically closed field.
	A $0$-dimensional subscheme $S \subset \bp^2_k$ of degree $d$ is said to {\bf impose independent conditions
	on curves of degree $n$} if
		\begin{align*}
			h^0(\bp^2_k, \sci_S(n)) = h^0(\bp^2_k, \sco_{\bp^2_k}(n)) - d,
		\end{align*}
		where $\sci_S \subset \sco_{\bp^2_k}$ is the ideal sheaf of $S$.
\end{definition}
\begin{definition}
	\label{definition:}
	A $g^r_d$ on a smooth curve $C$ is a line bundle $\scl$ of degree $d$ on $C$ with $h^0(C, \scl) \geq r+1$.
\end{definition}

\subsection{Showing there is a single $g^2_d$}

Our first goal is to show that a smooth plane curve
has only one $g^2_d$ in \autoref{proposition:unique-g2d}.

We start with a lemma characterizing when finite reduced subschemes
of $\bp^2$
supported on at most $n+2$ points
impose independent conditions on curves of degree $n$.
\begin{lemma}
	\label{lemma:independent-conditions}
	Let $S$ be any reduced subscheme of $\bp^2$ whose support
	consists of $n+1$ points.
	Then, $S$ imposes independent conditions on curves of degree $n$.
	Further, if $S \subset \bp^2$ is a reduced subscheme supported on $n+2$ points,
	then $S$ fails to impose independent conditions on curves of degree $n$ if and only if $S$ is contained
	in some line.
\end{lemma}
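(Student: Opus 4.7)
The plan is to prove both parts by constructing explicit unions of lines that witness the independence of conditions.

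For the first claim, the strategy is to show surjectivity of the evaluation map $H^0(\bp^2, \sco_{\bp^2}(n)) \to \bigoplus_{p_i \in S} k(p_i)$, which is equivalent to $S$ imposing independent conditions. For each $p_i \in S$, I would exhibit a degree $n$ curve vanishing on $S \setminus \{p_i\}$ but not at $p_i$: for each of the other $n$ points $p_j$, choose a line through $p_j$ not passing through $p_i$ (possible since $k$ is infinite), and take the union of these $n$ lines.

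For the reverse direction of the second claim, if $S$ is contained in a line $L$, then twisting the ideal sheaf sequence $0 \to \sco_{\bp^2}(-1) \to \sco_{\bp^2} \to \sco_L \to 0$ by $\sco_{\bp^2}(n)$ yields $h^0(\sci_L(n)) = h^0(\sco_{\bp^2}(n-1)) = \binom{n+1}{2}$. Since $\sci_L \subset \sci_S$, this forces $h^0(\sci_S(n)) \geq \binom{n+1}{2}$, which exceeds the expected value $\binom{n+2}{2} - (n+2) = \binom{n+1}{2} - 1$ by one, so $S$ fails to impose independent conditions.

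For the forward direction, suppose $S$ is not contained in any line. As in Part 1, it suffices to show that for each $p \in S$ there is a degree $n$ curve through $S \setminus \{p\}$ but not through $p$. I would split into two cases. If $S \setminus \{p\}$ lies on some line $L$, then $p \notin L$ (since $S$ is not on a line), so $L$ together with $n - 1$ additional lines avoiding $p$ gives the desired curve. Otherwise, $S \setminus \{p\}$ contains a non-collinear triple $a, b, c$; at most one of the three lines $L_{ab}, L_{ac}, L_{bc}$ can pass through $p$, because two such lines would meet at $p$ while also meeting at a point of $\{a,b,c\}$, forcing $p$ to coincide with that point. Pick a line among $L_{ab}, L_{ac}, L_{bc}$ missing $p$, and complete it to a degree $n$ curve by adding a line through each of the remaining $n - 1$ points in $S \setminus \{p\}$, each chosen to avoid $p$.

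The main obstacle will be the case analysis in the forward direction of Part 2, where one must keep careful track of which auxiliary lines might pass through $p$ to ensure that the final union of lines contains $S \setminus \{p\}$ but avoids $p$. The rest of the argument reduces to a routine ideal sheaf computation and repeated use of the fact that over an infinite field there are infinitely many lines through any given point.
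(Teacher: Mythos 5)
Your proof is correct and follows essentially the same route as the paper: exhibiting explicit unions of lines passing through all but one point of $S$ and missing the remaining point. The only minor differences are that you verify surjectivity of the evaluation map at every point of $S$ rather than running the induction on $\deg S$ that lets the paper get away with a single well-chosen point at each stage, and that you spell out the easy converse (collinear $\Rightarrow$ dependent conditions, via $h^0(\sci_L(n)) = \binom{n+1}{2}$) which the paper leaves implicit.
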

\begin{proof}
	Since $S$ has degree $d$, it follows from the exact sequence
	\begin{equation}
		\label{equation:}
		\begin{tikzcd}
			\sci_S(n) \ar {r} & \sco_{\bp^2_k}(n) \ar {r} & \sco_S 
		\end{tikzcd}\end{equation}
	that
	\begin{align*}
			h^0(\bp^2_k, \sci_S(n)) \geq h^0(\bp^2_k, \sco_{\bp^2_k}(n)) - d.
	\end{align*}

	To conclude the proof,
	by induction on the degree of $S$, it suffices to show that for any $d \leq n+1$
	we can find some plane curve of degree $n$ passing through all but one point of $S$,
	but not passing through the last point of $S$. Further in the case $d= n+2$, it suffices to show
	we can find
	such a curve passing through all but one point of $S$, provided the $n+2$ points do not lie on a line.

	In the case $d \leq n+1$,
	to see this, let $p_d$ denote a particular point of $S$ and for each point $p_i \in S, p_i \neq p_d$, choose a line $\ell_i$
	passing through $p_i$ but not through $p_d$.
	In the case $d \leq n+1$,
	taking $C$ to be the union of the lines $\cup_{i\neq d} \ell_i$ provides a curve of degree $\leq n$
	passing through all but one point of $S$. Taking the union of this with a curve of degree $n-d-1$ not passing
	through $p_d$ provides the desired curve of degree $n$.

	To conclude, we only need verify that if $S$ is supported on $n+2$ non collinear points, there is some curve passing through all
	but one of these points.
	Choose three noncollinear points $p_1, p_2$, and $p_3$
	in the support of $S$.
	Upon reordering points, it suffices to show
	there is a curve passing through all points except $p_3$.
	Then, let $\ell_1$ be the line joining $p_1$ and $p_2$.
	For $2 \leq i \leq n$, let $\ell_i$ be a line passing through $p_{i+2}$ but not $p_3$.
	Then, $\cup_{i=1}^n \ell_i$ provides the desired curve of degree $n$ not passing through $p_3$.
\end{proof}

Using \autoref{lemma:independent-conditions}
we can compute the cohomology of invertible sheaves of low degree
on smooth plane curves.
\begin{lemma}
	\label{lemma:low-degree-g1m}
	Let $C$ be a smooth plane curve of degree $d$ over an algebraically closed field $k$.
	Let $p_1, \ldots, p_m$ be distinct points and $\scl := \sco_C(p_1+ \cdots+ p_m)$.
	Then, if $m \leq d-2$, we have $H^0(C, \scl) = 1$.
	If $m = d - 1$, then $h^0(C, \scl) =1$ unless $p_1, \ldots, p_m$ lie in a line $\ell$, in which case
	$h^0(C, \scl) = 2$ and $h^0(C, \sco_C(\ell \cap C)) = 3$. 
\end{lemma}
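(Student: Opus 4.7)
The plan is to rewrite $h^0(C, \scl)$ in terms of the dimension of the space of plane curves of degree $d-3$ containing $S := \{p_1, \ldots, p_m\}$, then invoke \autoref{lemma:independent-conditions}. First I would combine Riemann--Roch and Serre duality to get
\[
	h^0(C, \scl) = m - \binom{d-1}{2} + 1 + h^0(C, \omega_C(-D)),
\]
where $D = p_1 + \cdots + p_m$. By the adjunction formula $\omega_C \simeq \sco_C(d-3)$ (valid in any characteristic), the twist $\omega_C(-D)$ is identified with $\sci_{S/C}(d-3)$, where $\sci_{S/C}$ is the ideal sheaf of $S$ in $C$.

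Next I would reduce the computation of $h^0(C, \sci_{S/C}(d-3))$ to one on $\bp^2$. Twisting the standard sequence $0 \to \sci_C \to \sci_S \to \sci_{S/C} \to 0$ on $\bp^2$ by $\sco_{\bp^2}(d-3)$ and using $\sci_C \simeq \sco_{\bp^2}(-d)$ yields
\[
	0 \to \sco_{\bp^2}(-3) \to \sci_S(d-3) \to \sci_{S/C}(d-3) \to 0.
\]
Since $H^0(\bp^2, \sco_{\bp^2}(-3)) = H^1(\bp^2, \sco_{\bp^2}(-3)) = 0$, the global sections of $\sci_{S/C}(d-3)$ agree with those of $\sci_S(d-3)$.

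The classification by cases now follows from \autoref{lemma:independent-conditions}. For $m \leq d-2$, the inequality $m \leq (d-3)+1$ places us in the first regime of that lemma, so $h^0(\sci_S(d-3)) = \binom{d-1}{2} - m$, and plugging back gives $h^0(C, \scl) = 1$. For $m = d-1$, we are in the $n+2$ regime with $n = d-3$: noncollinear points again impose independent conditions and give $h^0(C, \scl) = 1$. If the $d-1$ points are collinear on a line $\ell$, then a B\'ezout argument shows that any degree $d-3$ curve meeting $\ell$ in $d-1 > d-3$ points must contain $\ell$, so such curves are precisely those of the form $\ell \cup C'$ with $\deg C' = d-4$; hence $h^0(\sci_S(d-3)) = \binom{d-2}{2} = \binom{d-1}{2} - (d-1) + 1$, yielding $h^0(C, \scl) = 2$. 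For the final claim, $\sco_C(\ell \cap C) \simeq \sco_C(1)$ as line bundles on $C$, and the restriction sequence $0 \to \sco_{\bp^2}(1-d) \to \sco_{\bp^2}(1) \to \sco_C(1) \to 0$, combined with $h^i(\bp^2, \sco_{\bp^2}(1-d)) = 0$ for $i = 0, 1$, immediately gives $h^0(C, \sco_C(1)) = 3$.

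The main obstacle will be the collinear subcase of $m = d-1$: \autoref{lemma:independent-conditions} only certifies that independent conditions fail, but the conclusion $h^0(C, \scl) = 2$ requires knowing that the deficit is exactly one. This is where the B\'ezout-based identification of the linear system with $\ell \cup \lvert\sco_{\bp^2}(d-4)\rvert$ is essential; everything else is a bookkeeping exercise on top of the lemma and adjunction.
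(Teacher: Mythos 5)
Your proof is correct, and its skeleton is the same as the paper's: both reduce $h^0(C,\scl)$ to the dimension of the space of degree $d-3$ plane curves through $S$, via Serre duality, the identification $\omega_C \simeq \sco_C(d-3)$, and \autoref{lemma:independent-conditions}. (The paper transports $h^0(\bp^2_k,\sci_S(d-3))$ to $h^0(C,\sco_C(d-3)\otimes\scl^\vee)$ by a five-lemma argument on restriction maps; your exact sequence $0 \to \sco_{\bp^2}(-3) \to \sci_S(d-3) \to \sci_{S/C}(d-3) \to 0$ accomplishes the identical transfer more directly.) The one place you genuinely diverge is the collinear $m=d-1$ subcase. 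The paper sidesteps computing the exact failure of independent conditions: it gets $h^0(C,\scl)\le 2$ by dropping one point and using the $m=d-2$ case, gets $h^0(C,\sco_C(\ell\cap C))\ge 3$ for free because $C$ is a plane curve, and then squeezes, since the two divisors differ by a single effective point. You instead compute $h^0(\bp^2,\sci_S(d-3))$ exactly, via the B\'ezout observation that a degree $d-3$ curve through $d-1$ collinear points contains $\ell$, so the linear system is $\ell$ plus curves of degree $d-4$. Both work; your computation is more self-contained and pins down the deficit directly (and works verbatim at $d=4$, where $\sco_{\bp^2}(d-4)$ is the structure sheaf), while the paper's squeeze avoids any exact count in the degenerate configuration. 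Your argument for $h^0(C,\sco_C(1))=3$ via the restriction sequence matches the standard fact the paper invokes implicitly.
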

\begin{proof}
	Let $S := \cup_{i=1}^m V(p_i)$.
	By \autoref{lemma:independent-conditions} applied in the case that the
	parameter called $n$ there is $d - 3$ here,
	we have
	an exact sequence
\begin{equation}
	\nonumber
\begin{tikzpicture}[baseline= (a).base]
\node[scale=.7] (a) at (0,0){
		\begin{tikzcd}
			0 \ar {r} &  H^0(\bp^2_k, \sco_{\bp^2_k}(d-3) \otimes \sci_S) \ar {r} & H^0(\bp^2_k, \sco_{\bp^2_k}(d-3)) \ar {r} & H^0(\bp^2_k, \sco_S) \ar {r} & 0.
		\end{tikzcd}
			};
\end{tikzpicture}
\end{equation}

We obtain a corresponding map of exact sequences
\begin{equation}
		\label{equation:points-to-curve-cohomology}
\begin{tikzpicture}[baseline= (a).base]
\node[scale=.8] (a) at (0,0){
		\begin{tikzcd}
			0 \ar {r} &  H^0(\bp^2_k, \sco_{\bp^2_k}(d-3) \otimes
			\sci_S) \ar {r}{r} \ar {d}{r} & H^0(\bp^2_k,
			\sco_{\bp^2_k}(d-3)) \ar {r} \ar {d}{r_{d-3}} &
		H^0(\bp^2_k, \sco_S) \ar {d}{r_S} \\
			0 \ar {r} &  H^0(C, \sco_{C}(d-3) \otimes \scl^\vee)\ar {r} & H^0(C, \sco_C(d-3)) \ar {r} & H^0(C, \sco_S)
		\end{tikzcd}
		};
\end{tikzpicture}
\end{equation}
	coming from the natural restriction of sheaves to $C$ and using that $\sci_S|_C \simeq \scl^\vee$.

	We will next show the map $r$ above is an isomorphism.
	Observe that the restriction maps $r_{d-3}$ and $r_S$ of \eqref{equation:points-to-curve-cohomology}
	are isomorphisms,
	as follows from the exact sequence on cohomology
	associated to 
	\begin{equation}
		\label{equation:}
		\begin{tikzcd}
			0 \ar {r} &  \sco_{\bp^2_k}(\alpha-d) \ar {r} & \sco_{\bp^2_k}(\alpha) \ar {r} & \sco_C(\alpha) \ar {r} & 0 
		\end{tikzcd}\end{equation}
	applied in the cases $\alpha = 0$ and $\alpha = d-3$.
	Therefore, the first vertical map $r$
	of \eqref{equation:points-to-curve-cohomology}
	is also an isomorphism by the five lemma.

	We next claim that $h^0(C, \scl) = 1$ either if $\deg S \leq d-2$ or
	if $\deg S = d - 1$ and $S$ is not contained
	in a line.
	Indeed, in either such case,
	by \autoref{lemma:independent-conditions}
	and the isomorphisms from
	\eqref{equation:points-to-curve-cohomology},	
	\begin{align*}
	h^0(C, \sco_C(d-3) \otimes \scl^\vee) &=
	h^0(C, \sco_{\mathbb P^2_k}(d-3) \otimes \sci_S) \\
	&=
	h^0(\mathbb P^2_k,\sco_{\mathbb P^2_k}(d-3)) - m
	\\
	&=
h^0(C,\sco_C(d-3)) - m.
	\end{align*}
	
	Next, observe that the genus of $C$ is $\frac{(d-1)(d-2)}{2}$ and so the canonical bundle of the plane curve $C$ 
	is 
$\sco_C(d-3)$,
	since $\sco_C(d-3)$ has degree $2g - 2$ and a $g$ dimensional space of global
	sections.
	Using this and geometric Riemann-Roch, we find
	\begin{align*}
		h^0(C, \scl) &= h^0(C, K_C \otimes \scl^\vee) + m - g + 1 \\
		&= h^0(C, \sco_C(d-3) \otimes \scl^\vee) + m - g + 1\\
		&= h^0(C, \sco_C(d-3)) - m + m -g +1 \\
		&= g - g + 1 \\
		&= 1.
	\end{align*}
	
	To conclude, we prove the second statement of the
	lemma.
	If $m = d-1$ and $p_1, \ldots, p_m$ are collinear, we know $h^0(C, \sco_C(p_1 + \cdots + p_{m-1})) = 1$ 
	by the above.
	Hence, $h^0(C, \sco_C(p_1 + \cdots + p_{m})) \leq 2$.
	But, if the points lie on a line $\ell$, then we know $h^0(C, C \cap
	\ell) \geq 3$, as $C$ is a plane curve.
	Since $C \cap \ell - (p_1 + \cdots + p_m)$ is an effective divisor of
	degree $1$, 
$h^0(C, \sco_C(p_1 + \cdots + p_{m}))$ and 
$h^0(C, C \cap\ell)$ differ by at most one, so 
the two inequalities must be equalities. That is,
	$h^0(C, \sco_C(p_1 + \cdots + p_{m})) = 2$ and $h^0(C, C \cap \ell) = 3$.
\end{proof}

Using the prior cohomological calculations,
in preparation for proving \autoref{proposition:unique-g2d},
we show smooth plane curves have no $g^1_{d-2}$'s and characterize
the $g^1_{d-1}$'s.
\begin{lemma}
	\label{lemma:no-g-1-m}
	Let $C \subset \bp^2_k$ be a smooth plane curve of degree $d \geq 4$,
	with $k$ an algebraically closed field. 
	Then, 
	\begin{enumerate}
		\item 
	$C$ has no $g^1_m$ for $m \leq d - 2$ and
\item 
	any $g^1_{d-1}$ is of the form $D - p$ for $p \in C$ and $D$ in the linear system $H^0(C, \sco_C(1))$.
	\end{enumerate}
\end{lemma}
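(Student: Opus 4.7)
The plan is to prove (1) by induction on $m$ and then deduce (2) from (1). Throughout, let $g = \binom{d-1}{2} \geq 3$ denote the genus of $C$.

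For part (1), the base cases $m = 1$ and $m = 2$ follow from the fact that $C$ is neither rational nor hyperelliptic. A $g^1_1$ would force $C \simeq \bp^1$, contradicting $g \geq 3$. A $g^1_2$, after subtracting any base points of a chosen pencil (which would otherwise produce a $g^1_1$), gives a degree-$2$ morphism $C \to \bp^1$, making $C$ hyperelliptic; but the identification $K_C \simeq \sco_C(d-3)$ from the proof of \autoref{lemma:low-degree-g1m}, together with the fact that the degree-$(d-3)$ Veronese is an embedding of $\bp^2$ for $d \geq 4$, shows that the canonical map of $C$ factors as $C \hookrightarrow \bp^2 \hookrightarrow \bp^{g-1}$ and hence is an embedding, ruling out hyperellipticity.

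For the inductive step, suppose $\scl$ is a $g^1_m$ with $3 \leq m \leq d-2$, assuming the statement for all smaller values. Pick two linearly independent sections $s_0, s_1$ of $\scl$, and let $B$ be their common zero divisor. If $B \neq 0$, then $\scl(-B)$ is a $g^1_{m - \deg B}$, contradicting the inductive hypothesis; so the pencil spanned by $s_0, s_1$ is base-point-free and defines a morphism $\phi: C \to \bp^1$ of degree $m$. If $\phi$ is separable, a general fiber is a reduced divisor $D$ of degree $m \leq d-2$ with $h^0(\sco_C(D)) \geq 2$, directly contradicting \autoref{lemma:low-degree-g1m}. If $\phi$ is inseparable, factor $\phi = \phi' \circ F^r$ with $F: C \to C^{(p)}$ the Frobenius morphism, $r \geq 1$, and $\phi'$ separable. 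Because $k$ is algebraically closed, $C^{(p^r)} \simeq C$ as abstract $k$-schemes, and $\phi'$ yields a $g^1_{m/p^r}$ on $C$ with $m/p^r < m$, contradicting induction.

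For part (2), let $\scl$ be a $g^1_{d-1}$. By (1) and the same reduction, the pencil from two sections is base-point-free; and the induced map $\phi: C \to \bp^1$ must be separable, for otherwise the Frobenius factorization gives a $g^1_{(d-1)/p^r}$ on $C$ with $(d-1)/p^r \leq (d-1)/2 \leq d-2$, contradicting (1). Hence a general fiber of $\phi$ is a reduced divisor $D$ of degree $d-1$ with $h^0(\sco_C(D)) \geq 2$; by the second clause of \autoref{lemma:low-degree-g1m}, the $d-1$ points of $D$ lie on a line $\ell$. Then $\ell \cap C$ is effective of degree $d$ containing $D$, so $\ell \cap C = D + p$ for a unique point $p$, yielding $\scl \simeq \sco_C(\ell \cap C) \otimes \sco_C(-p) \simeq \sco_C(1) \otimes \sco_C(-p)$, as required.

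The main obstacle is the positive-characteristic subtlety: when $\phi: C \to \bp^1$ is inseparable, its general fiber is non-reduced, so \autoref{lemma:low-degree-g1m} does not apply directly to the fiber. The remedy is to factor through Frobenius and use that the Frobenius twist $C^{(p^r)}$ is abstractly isomorphic to $C$ over the algebraically closed base field, which reduces the inseparable case to smaller-degree instances already handled by induction or by part (1).
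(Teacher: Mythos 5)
Your argument is correct and follows essentially the same strategy as the paper's proof: remove base points, reduce to the generically separable case by factoring through relative Frobenius, and then apply \autoref{lemma:low-degree-g1m} to a reduced general fiber (respectively, its second clause to get collinearity in the $g^1_{d-1}$ case, followed by $\ell \cap C = D + p$). The organizational differences --- induction on $m$ with explicit base cases via rationality and non-hyperellipticity --- are harmless but not really needed, since \autoref{lemma:low-degree-g1m} already disposes of reduced divisors of every degree $\leq d-2$ uniformly.

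One claim should be repaired: it is not true that $C^{(p^r)} \simeq C$ as $k$-schemes merely because $k$ is algebraically closed. For example, an elliptic curve over $\overline{\mathbb{F}}_p$ whose $j$-invariant lies outside $\mathbb{F}_{p^r}$ is not $k$-isomorphic to its Frobenius twist, which has $j$-invariant $j^{p^r}$. What is true, and all your argument actually requires, is that the projection $C^{(p^r)} \to C$ is an isomorphism of abstract schemes (it is the base change of the automorphism $\mathrm{Frob}^r$ of $\spec k$), and any such isomorphism preserves the degree and the dimension of $H^0$ of a line bundle, so a $g^1_{m/p^r}$ on $C^{(p^r)}$ does yield a $g^1_{m/p^r}$ on $C$ and the induction (or part (1)) applies. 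Alternatively, as the paper does, one can note that $C^{(p^r)}$ is again a smooth plane curve of degree $d$ --- pull back the embedding $C \hookrightarrow \mathbb{P}^2_k$ along Frobenius of $\spec k$ --- and quantify the statement over all such curves. With either fix the proof goes through.
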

\begin{proof}
	We first show that $C$ has no $g^1_m$ for $m \leq d - 2$.
	Suppose that $C$ has a $g^1_m$ for $m \leq d - 2$. Such a line bundle determines a map $C \ra \bp^1$ of degree
	at most $m$ (after possibly removing basepoints by twisting the line bundle down).
	Therefore, it suffices to show that $C$ has no dominant degree $m$ maps to $\bp^1_k$ for $m \leq d - 2$.
	That is, it suffices to show $C$ has no basepoint free $2$-dimensional linear systems of degree $m$ for $m \leq d -2$.
	
	So, suppose $C$ has some basepoint free linear systems of degree $m \leq d -2$ corresponding to a dominant map
	$C \ra \bp^1_k$.
	We next reduce to showing that $C$ has no line bundles corresponding to generically separable dominant maps
	$C \ra \bp^1_k$.
	This is automatic if $k$ has characteristic $0$.
	If $k$ has characteristic $p$,
	and $C \ra \bp^1_k$ is not generically separable, $C
	\ra \bp^1_k$ factors as the composite of
	$C \to C^{(p^n)} \to \mathbb P^1$ where 
$C \to C^{(p^n)}$ is relative $n$-fold Frobenius of $C$ and 
$C^{(p^n)} \to \mathbb P^1$ is dominant and generically separable
\cite[\href{https://stacks.math.columbia.edu/tag/0CD2}{Tag
0CD2}]{stacks-project}.
If $C$ is a plane curve, there is an embedding $C \to \mathbb P^2_k$ and so
pulling back along the Frobenius morphism $\on{Frob}_{p^n} : \spec k \to \spec
k$ induces an embedding $C^{(p^n)} \to (\mathbb P^2_k)^{(p^n)} \simeq \mathbb
P^2_k$.
If we show $C^{(p^n)}$ has no $g^1_m$ for $m \leq d -2$, this will imply the
hypothetical map $C^{(p^n)} \to \mathbb P^1_k$ does not exist. Hence, replacing
$C$ with $C^{(p^n)}$, we can
assume $C \to \bp^1_k$ is generically separable.

	So, we now show that there are no generically separable maps $C \ra \bp^1_k$.
	Since this map is generically separable, a generic fiber of the map will
	be reduced, hence of the form $p_1 + \cdots + p_m$ with $p_1, \ldots,
	p_m$ distinct..
	Thus, it remains to show there are no line bundles $\scl = \sco_C(p_1+ \cdots+ p_n)$ with
	$h^0(C, \scl) \geq 2$. 
	This holds by \autoref{lemma:low-degree-g1m},
	so $C$ has no $g^1_m$'s.

	We next verify the second claim that only $g^1_{d-1}$'s on $C$ are given by divisors of the form $D-p$ with $p \in C$ and $D$ in the linear system 
	$H^0(C, \sco_C(1))$.
	The proof is similar to the previous case.
	Let $\scl$ be some $g^1_{d-1}$.
	Note that $\scl$ must be basepoint free, as otherwise, twisting down by the basepoints, we would obtain
	some $g^1_m$ for $m \leq d-2$, contradicting the previous part.
	Thus $\scl$ determines a map $C \ra \bp^1_k$.
	Similarly to the argument made above, we can factor this as the
	composition of a power of relative Frobenius and a generically separable
	map. Since the relative Frobenius map has degree $p>1$, it suffices to
	verify the claim in the case that $C \to \bp^1_k$ is
	generically separable.
	Hence, we may assume $\scl$ determines a generically separable dominant morphism $C \ra \bp^1_k$.
	Therefore, we may assume that $\scl \cong \sco_C(p_1 + \cdots + p_{d-1})$ for $p_1, \ldots, p_{d-1}$ distinct.
	Then, by \autoref{lemma:no-g-1-m},
	we have $h^0(C, \scl) = 1$ unless the points $p_1, \ldots, p_{d-1}$ all lie on a line.
	In the case that the points $p_1, \ldots, p_{d-1}$ lie on a line, taking $D$ to be the intersection
	of $C$ with that line in $\bp^2_k$, we obtain that $p_1 + \cdots + p_{d-1} = D - q$,
	where $q$ is by definition $D - (p_1 + \cdots + p_{d-1})$.
\end{proof}

Using \autoref{lemma:no-g-1-m}, we can now deduce the plane curves have a unique
$g^2_d$.
\begin{proposition}
	\label{proposition:unique-g2d}
	Let $C \subset \bp^2_k$ be a smooth plane curve of degree $d \geq 4$,
	with $k$ an algebraically closed field. 
	Then, C has at most one $g^2_d$ and that $g^2_d$ is a complete linear series.
\end{proposition}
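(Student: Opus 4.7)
The plan is to prove that any $g^2_d$ on $C$ must be isomorphic to the hyperplane bundle $\sco_C(1)$, from which both conclusions of the proposition follow. Let $\scl$ be any $g^2_d$, so that $\deg \scl = d$ and $h^0(C, \scl) \geq 3$. For each point $p \in C$, the twist $\scl \otimes \sco_C(-p)$ has degree $d - 1$ and $h^0 \geq h^0(C, \scl) - 1 \geq 2$, so it is a $g^1_{d-1}$. By \autoref{lemma:no-g-1-m}, there exist a divisor $D_p$ in the linear system $|\sco_C(1)|$ and a point $q_p \in C$ with $\scl \otimes \sco_C(-p) \cong \sco_C(D_p - q_p)$. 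Setting $\scm := \scl \otimes \sco_C(-1)$, we obtain a degree-zero line bundle satisfying $\scm \cong \sco_C(p - q_p)$ for every $p \in C$.

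The main step is to show $\scm \cong \sco_C$. For any two points $p, p' \in C$, the isomorphisms $\scm \cong \sco_C(p - q_p) \cong \sco_C(p' - q_{p'})$ give the linear equivalence $p + q_{p'} \sim p' + q_p$ of effective degree-$2$ divisors. Since $d \geq 4$ implies $d - 2 \geq 2$, \autoref{lemma:no-g-1-m} shows that $C$ admits no $g^1_2$, so every effective degree-$2$ divisor is the unique effective representative of its linear equivalence class. Hence $p + q_{p'} = p' + q_p$ as divisors; if $p \neq p'$, this forces $p = q_p$ and $p' = q_{p'}$, so in fact $q_p = p$ for every $p \in C$. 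Therefore $\scm \cong \sco_C$ and $\scl \cong \sco_C(1)$, which establishes uniqueness.

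To see that this unique $g^2_d$ is complete, we compute $h^0(C, \sco_C(1)) = 3$ using the short exact sequence
$$0 \to \sco_{\bp^2_k}(1 - d) \to \sco_{\bp^2_k}(1) \to \sco_C(1) \to 0$$
together with the vanishing $H^1(\bp^2_k, \sco_{\bp^2_k}(n)) = 0$ for all $n$, which yields $h^0(C, \sco_C(1)) = h^0(\bp^2_k, \sco_{\bp^2_k}(1)) = 3$. The main subtlety in this argument is the passage from the linear equivalence $p + q_{p'} \sim p' + q_p$ to the divisor equality $p + q_{p'} = p' + q_p$; this step uses non-hyperellipticity of smooth plane curves of degree $\geq 4$ in an essential way, and is already supplied in all characteristics by \autoref{lemma:no-g-1-m}.
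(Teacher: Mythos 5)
Your proof is correct, and it shares the paper's opening reduction: both arguments apply \autoref{lemma:no-g-1-m} to write the given $g^2_d$ twisted down by a point as $\sco_C(1)$ twisted down by another point, thereby reducing everything to showing that a degree-zero line bundle of the form $\sco_C(p-q)$ is trivial. The endgame, however, is genuinely different. The paper fixes a \emph{single} point $p$ in a divisor of the series and proves $p=q$ by raising to the $(d-3)$-rd power: it identifies the $(d-3)$-rd power of the $g^2_d$ with $K_C \cong \sco_C(d-3)$, deduces $\sco_C((d-3)(p-q)) \cong \sco_C$, and then uses $h^0(C,\sco_C((d-3)p))=1$ to force $p=q$. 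You instead let $p$ vary, note that the degree-zero twist is $\sco_C(p-q_p)$ for \emph{every} $p$, and play two distinct points off each other using the absence of a $g^1_2$ (valid since $d-2 \geq 2$), so that linearly equivalent effective degree-$2$ divisors must coincide. Your route is somewhat more elementary: it sidesteps the identification of the $(d-3)$-rd power with $K_C$, which in the paper rests on the assertion that this power has a $g$-dimensional space of sections, and it isolates precisely which classical input is needed. The treatments of completeness also differ in a minor way: the paper derives $h^0=3$ up front from the non-existence of $g^1_{d-2}$'s, while you obtain it at the end from the direct computation $h^0(C,\sco_C(1))=3$; both are valid.
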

\begin{proof}
	First, suppose $C$ is a smooth curve with a $g^2_d$. That is, $C$ has an invertible sheaf $\scl$
	with $h^0(C, \scl) \geq 3$.
	We claim $h^0(C, \scl) = 3$.
	To see this, let $D$ be an effective divisor in the linear system $H^0(C, \scl)$.
	Let $p_1, p_2$ be two points.
	If $h^0(C, \scl) > 3$, then $D - p_1 - p_2$ is a $g^1_{d-2}$, which does not exist by
	\autoref{lemma:no-g-1-m}.
	
	To complete the proof, we only need to check $C$ has a unique $g^2_d$.
	For this, let $\scm$ be any $g^2_d$.
	Then, for any $D$ in the linear system $H^0(C, \scm)$ and $p \in \supp(D)$, we have
	$\sco_C(D - p)$ is a $g^1_{d-1}$.
	By \autoref{lemma:no-g-1-m},
	$D - p$ consists of $d-1$ collinear points. Therefore, there is some point $q$ so that
	$\scm(-p) \cong \sco_C(1)(-q)$.
	Consider the invertible sheaf,
	\begin{align*}
	\scl := \scm \otimes \sco_C(1)^\vee \cong \sco_C(-q+p).
	\end{align*}

	To conclude the proof, it suffices to show $p = q$.
	We know $\scm^{\otimes (d-3)} \cong K_C$,
	since $\scm^{\otimes (d-3)}$ and $K_C$ are degree $2g-2$ bundles with a
	$g$ dimensional space of global sections.
	Therefore, $\scl^{\otimes (d-3)} \cong \sco_C$.
	This implies $\sco_C((d-3)p) \otimes \sco_C(-(d-3)q) \cong \sco_C$.
	Using \autoref{lemma:no-g-1-m}, we see $H^0(C, \sco_C((d-3)p)) = 1$.
	This implies that the only section of $\sco_C((d-3)p)$ is the trivial section,
	which does not vanish at any point other than $p$,
	and so $\sco_C((d-3)p) \otimes \sco_C(-(d-3)q)$ has no sections unless $p = q$.
	Therefore, $\scm \cong \sco_C(1)$, as desired.
\end{proof}

\subsection{Showing that $g^2_d$ is reduced}

Our next goal is to show that the unique $g^2_d$
on a smooth plane curve (whose uniqueness was established
in \autoref{proposition:unique-g2d}) corresponds to
a reduced point of a parameter space for $g^2_d$'s
(which we shall define in \autoref{definition:grd}). We prove this in
\autoref{proposition:reduced-point}.
In order to do so, we first recall a standard fact that
plane curves are projectively normal.

\begin{lemma}
	\label{lemma:projective-normality}
	Let $k$ be a field. Any smooth plane curve $C \subset \bp^2_k$ is projectively normal, meaning
	that for all $n > 0$, the map $H^0(\bp^2_k, \sco_{\bp^2_k}(n)) \ra H^0(C, \sco_C(n))$ is surjective.
\end{lemma}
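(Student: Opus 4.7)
The plan is to exploit the fact that any plane curve $C \subset \bp^2_k$ of degree $d$ is cut out globally by a single homogeneous polynomial of degree $d$, so that the ideal sheaf of $C$ is $\sco_{\bp^2_k}(-d)$. Twisting the defining short exact sequence
\begin{equation*}
0 \to \sco_{\bp^2_k}(-d) \to \sco_{\bp^2_k} \to \sco_C \to 0
\end{equation*}
by $\sco_{\bp^2_k}(n)$ produces
\begin{equation*}
0 \to \sco_{\bp^2_k}(n-d) \to \sco_{\bp^2_k}(n) \to \sco_C(n) \to 0.
\end{equation*}
Taking the associated long exact sequence in cohomology, the restriction map sits inside
\begin{equation*}
H^0(\bp^2_k, \sco_{\bp^2_k}(n)) \to H^0(C, \sco_C(n)) \to H^1(\bp^2_k, \sco_{\bp^2_k}(n-d)).
\end{equation*}

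To conclude, I would invoke the standard computation of the cohomology of line bundles on projective space: on $\bp^2_k$ one has $H^1(\bp^2_k, \sco_{\bp^2_k}(m)) = 0$ for every integer $m$. This vanishing kills the rightmost term and forces the restriction map to be surjective for every $n > 0$, which is exactly projective normality. This cohomology vanishing holds over any base field, so no characteristic-specific care is required.

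There is no real obstacle here; the only thing to verify is that the ideal sheaf of $C$ really is $\sco_{\bp^2_k}(-d)$, which is automatic since a plane curve of degree $d$ is an effective Cartier divisor in the linear system $|\sco_{\bp^2_k}(d)|$. Smoothness of $C$ is not actually used in the argument, nor is the hypothesis $d \geq 4$; both are only present because of how the lemma will be applied later in the paper.
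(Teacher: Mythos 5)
Your proof is correct and is essentially identical to the paper's: both twist the ideal sheaf sequence, identify $\sci_C(n) \cong \sco_{\bp^2_k}(n-d)$, and conclude from the vanishing of $H^1(\bp^2_k, \sco_{\bp^2_k}(m))$ for all $m$. Your added observations (that smoothness and $d \geq 4$ are not needed) are accurate.
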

\begin{proof}
	By the long exact sequence associated to
	\begin{equation}
		\label{equation:}
		\begin{tikzcd}
			0 \ar {r} &  \sci_C \ar {r} & \sco_{\bp^2_k} \ar {r} & \sco_C \ar {r} & 0 
		\end{tikzcd}\end{equation}
	to verify projective normality, we only need verify $H^1(\bp^2, \sci_C(n)) = 0$ for all $n \geq 0$.
	Say $C$ has degree $d$. Then,
	$\sci_C(n) \cong \sco_{\bp^2_k}(n-d)$.
	Therefore, $H^1(\bp^2, \sci_C(n)) = H^1(\bp^2, \sco_{\bp^2_k}(n-d)) = 0$.
\end{proof}

We next define the scheme $\scg^r_d(p)$ for $p: C \ra S$
a smooth proper curve with geometrically connected fibers of genus $g$.

\begin{definition}[~\protect{\cite[Chapter XXI, Definition 3.12]{ACMG:geometryOfCurves}}]
	\label{definition:grd}
	Suppose $p : C \ra S$ is a smooth proper curve with geometrically
	connected fibers of genus $g$.
	Define the fibered category
	$\scg^r_d(p)$ sending a map $f:T \ra S$
	to the set of equivalence classes of pairs $\left( \scl, \sch \right)/\sim$ defined as follows:
	Let $\iota: t \ra T$ be a point and define the corresponding fiber square
	\begin{equation}
		\label{equation:}
		\begin{tikzcd} 
			C_t \ar {r}{\iota_C} \ar {d}{p_t} & C_T \ar {d}{p_T} \\
			t \ar {r}{\iota} & T.
		\end{tikzcd}\end{equation}
	A pair $(\scl, \sch)$ consists of a line bundle $\scl$
	on $C \times_S T$ whose restriction to each fiber of $p_T: C_T \ra T$
	has degree $d$ and a locally free rank $r+1$ subsheaf $\sch \subset p_{T*}\scl$ so that for
	each fiber $\iota: t \ra T$,
	the natural composition 
	\begin{align}
		\label{equation:base-change-restriction}
		\iota^* \sch \ra \iota^* p_{T*} \scl \ra p_{t*} \iota_C^* \scl
	\end{align}
	is injective.
	The equivalence relation $\sim$ defined on pairs $\left( \scl, \sch \right)$ dictates that two pairs
	$\left( \scl, \sch \right)$ and $\left( \scl', \sch' \right)$ are equivalent
	if there is an invertible sheaf $\scq$ on $T$ and an isomorphism $\scl' \cong \scl \otimes p_T^* \scq$
	which induces an isomorphism $\sch' \simeq \sch \otimes \scq$.
\end{definition}
\begin{theorem}[~\protect{\cite[Chapter XXI, Theorem 3.13]{ACMG:geometryOfCurves}}]
	\label{theorem:representability-of-grd}
	For $p: C \ra S$ a smooth proper curve with geometrically connected fibers
	of genus $g$ admitting a section, the functor
	$\scg^r_d$ defined in \autoref{definition:grd} is represented by an $S$ scheme.
\end{theorem}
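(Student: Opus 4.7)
\emph{Proof plan for \autoref{theorem:representability-of-grd}.}
The plan is to realize $\scg^r_d(p)$ as a closed subscheme of a relative Grassmannian over the (rigidified) degree $d$ Picard scheme of $p:C \to S$. First, using the given section $\sigma: S \to C$, I would invoke the existence of the relative Picard scheme $P := \pic^d_{C/S,\sigma}$ rigidified along $\sigma$, which is an $S$-scheme representing the functor of degree $d$ line bundles $\scl$ on $C_T$ equipped with a trivialization along $\sigma_T$. This rigidification precisely kills the equivalence relation in \autoref{definition:grd}, since the freedom of tensoring $\scl$ by $p_T^*\scq$ is absorbed by choosing the trivialization. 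Consequently, on $C \times_S P$ there is a universal (Poincar\'e) line bundle $\scp$.

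Next, since a section $\sigma$ exists, I would take $D := N\sigma(S) \subset C$ for $N \gg 0$, viewed as a relative effective Cartier divisor of relative degree $N$. Choose $N$ so large that for every line bundle $\scl$ of degree $d$ on any geometric fiber $C_t$, one has $H^1(C_t, \scl_t(D_t)) = 0$; concretely, $N \geq 2g - 1 - d$ suffices. Then cohomology and base change applied to $\scp(D_P)$ over $P$ shows that $\sce := p_{P*}\scp(D_P)$ is locally free on $P$ of rank $d + N + 1 - g$, and that for any $T \to P$ the formation of $\sce$ commutes with base change. The short exact sequence $0 \to \scp \to \scp(D_P) \to \scp(D_P)|_{D_P} \to 0$ then pushes forward to a four-term exact sequence
\begin{equation*}
0 \to p_{P*}\scp \to \sce \xrightarrow{\varphi} \scf \to R^1 p_{P*}\scp \to 0,
\end{equation*}
where $\scf := p_{P*}(\scp(D_P)|_{D_P})$ is locally free since $D_P \to P$ is finite flat.

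The key observation is that specifying a pair $(\scl, \sch)$ over $T$, modulo the stated equivalence, is the same as specifying a $T$-point of $P$ (giving $\scl$) together with a rank $r+1$ locally free subsheaf $\sch \subset p_{T*}\scl = \ker(\varphi_T)$ whose formation commutes with arbitrary base change to points $\iota:t\to T$. Working inside $\sce_T$, this means specifying a rank $r+1$ subbundle of $\sce_T$ killed by $\varphi_T$. I would therefore form the Grassmannian bundle $G := \on{Gr}(r+1,\sce) \to P$, with universal rank $r+1$ subbundle $\scu \subset \pi^*\sce$, and take the closed subscheme $Z \subset G$ cut out by the vanishing of the composition
\begin{equation*}
\scu \hookrightarrow \pi^*\sce \xrightarrow{\pi^*\varphi} \pi^*\scf.
\end{equation*}
This vanishing is a closed condition, defining $Z$ as a closed subscheme of $G$, hence an $S$-scheme.

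Finally, I would verify that $Z$ represents $\scg^r_d(p)$. A $T$-point of $Z$ produces a line bundle $\scl$ on $C_T$ (via the $P$-component) and an inclusion $\sch \hookrightarrow \sce_T$ with image in $\ker\varphi_T = p_{T*}\scl$, giving the desired subsheaf. The pointwise injectivity condition \eqref{equation:base-change-restriction} is automatic: for $\iota:t\to T$, the map $\iota^*\sch \to \iota^*\sce = H^0(C_t,\scl_t(D_t))$ is injective because $\sch$ is a subbundle of $\sce_T$, and its image lies in $H^0(C_t,\scl_t) = \ker(\iota^*\varphi)$ by construction, so $\iota^*\sch \to p_{t*}\iota_C^*\scl$ is injective. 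Conversely, any pair $(\scl,\sch)$ over $T$ in the functor produces a $T$-point of $P$ and then, by pushing $\sch \subset p_{T*}\scl \subset \sce_T$, a $T$-point of $G$ lying in $Z$; well-definedness modulo the equivalence relation follows from the rigidification defining $P$. The main delicate step is verifying that the formation of $p_{T*}\scl$, $\sce$, and $\scf$ all commute with base change in a compatible way so that the subbundle $\sch$ really does correspond to a $T$-point of $Z$; this is exactly what the large choice of $N$ guarantees via cohomology and base change.
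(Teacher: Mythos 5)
Your proposal is correct and is essentially the standard construction from the cited source \cite[Chapter XXI]{ACMG:geometryOfCurves} (and \cite[Chapter IV]{ACGH:I}): the paper itself gives no proof of \autoref{theorem:representability-of-grd}, quoting it as known, and the reference proves it exactly as you do, by rigidifying $\pic^d_{C/S}$ along the section, twisting by a large multiple of the section so that cohomology and base change makes $\sce \xrightarrow{\varphi} \scf$ a map of vector bundles with $p_{T*}\scl = \ker(\varphi_T)$, and cutting out the locus in $\on{Gr}(r+1,\sce)$ where the universal subbundle is killed by $\varphi$. Your identification of the fiberwise injectivity condition \eqref{equation:base-change-restriction} with the subbundle (locally split) condition in $\sce_T$ is the right key point and is handled correctly.
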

We are now ready to state and prove the main result of this section.
\begin{proposition}
	\label{proposition:reduced-point}
	Let $C$ be a degree $d \geq 4$ smooth plane curve over an algebraically closed field $k$.
	Then, $\scg^2_d(p:C \ra \spec k)$ is isomorphic to a copy of $\spec k$.
	That is, it is a reduced point.
\end{proposition}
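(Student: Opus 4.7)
The plan is to combine \autoref{proposition:unique-g2d} to pin down a unique geometric point on $\scg^2_d(C \to \spec k)$, and then verify its Zariski tangent space vanishes by a standard cup-product (Petri-map) computation; the necessary surjectivity of the Petri map will come from projective normality.

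First, by \autoref{proposition:unique-g2d} any $g^2_d$ on $C$ is the complete linear series on $\sco_C(1)$ coming from the embedding $C \hookrightarrow \bp^2_k$, and by \autoref{lemma:projective-normality} together with the cohomology of $\sco_{\bp^2_k}(1)$ we have $h^0(C,\sco_C(1)) = 3$. Hence $\scg^2_d(C \to \spec k)$ has the unique $k$-point $(\sco_C(1), H^0(C,\sco_C(1)))$, and by \autoref{theorem:representability-of-grd} it is a $k$-scheme. It remains only to show the Zariski tangent space at this point vanishes.

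Since $h^0 = r+1 = 3$, any locally free rank $3$ subsheaf of the pushforward is forced to equal the whole pushforward, so a tangent vector amounts to a first-order deformation $\tilde\scl$ of $\sco_C(1)$ to $C \times_k \spec k[\epsilon]/(\epsilon^2)$ such that the pushforward of $\tilde\scl$ is free of rank $3$ over $k[\epsilon]/(\epsilon^2)$. Writing $\xi \in H^1(C,\sco_C)$ for the extension class of $\tilde\scl$ and taking the long exact sequence in cohomology of
\[ 0 \to \sco_C(1) \xrightarrow{\epsilon} \tilde\scl \to \sco_C(1) \to 0, \]
this freeness is equivalent to the vanishing of the connecting map $\smile \xi \colon H^0(C,\sco_C(1)) \to H^1(C,\sco_C(1))$. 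By the compatibility of cup product with Serre duality, this in turn is equivalent to $\xi$ annihilating the image of the Petri map
\[ \mu_0 \colon H^0(C,\sco_C(1)) \otimes H^0(C, K_C \otimes \sco_C(-1)) \longrightarrow H^0(C, K_C) \]
under the pairing $H^1(C,\sco_C) \cong H^0(C,K_C)^\vee$. Hence the tangent space equals $(\im \mu_0)^\perp$, and its vanishing reduces to surjectivity of $\mu_0$.

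Using $K_C \cong \sco_C(d-3)$ (as already noted in the proof of \autoref{lemma:low-degree-g1m}), the Petri map reads $H^0(C,\sco_C(1)) \otimes H^0(C,\sco_C(d-4)) \to H^0(C,\sco_C(d-3))$. By \autoref{lemma:projective-normality}, all sections of $\sco_C(n)$ for $n \geq 0$ are restrictions from $\bp^2_k$, so $\mu_0$ factors as the polynomial multiplication $k[x,y,z]_1 \otimes k[x,y,z]_{d-4} \twoheadrightarrow k[x,y,z]_{d-3}$, which is surjective since every monomial of degree $d-3 \geq 1$ is divisible by $x$, $y$, or $z$, followed by the projective-normality surjection $H^0(\bp^2_k,\sco(d-3)) \twoheadrightarrow H^0(C,\sco_C(d-3))$. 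Hence $\mu_0$ is surjective, the tangent space is $0$, and $\scg^2_d(C \to \spec k) \cong \spec k$. The main technical care lies in the tangent-space identification via cup product and Serre duality, but this is purely algebraic and characteristic-independent, so the argument goes through uniformly in arbitrary characteristic, as required for defining the relevant stacks over $\spec \bz$.
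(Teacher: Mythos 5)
Your proof is correct and follows essentially the same route as the paper: uniqueness of the $g^2_d$ from \autoref{proposition:unique-g2d}, reduction of the tangent-space computation to surjectivity of the Petri map, and surjectivity of the Petri map via projective normality and polynomial multiplication. The only difference is that you derive the identification of the tangent space with $(\im \mu_0)^\perp$ directly from first-order deformations, whereas the paper cites \cite[Chapter IV, Proposition 4.1(iii)]{ACGH:I} for this step (noting, as you verify explicitly, that the argument is characteristic-independent).
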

\begin{proof}
	Since the underlying set of $\scg^2_d(p)$ is the set
	of $g^2_d$'s on $C$, together with a $3$-dimensional
	space of global sections,
	by \autoref{proposition:unique-g2d}, $\scg^2_d(p)$ is supported on a point.
	Here we are using that the unique $g^2_d$
	is not a $g^3_d$, as was shown in \autoref{proposition:unique-g2d}.

	It only remains to show this point is reduced.
	Indeed, using 
	\cite[Chapter IV, Proposition 4.1(iii)]{ACGH:I}, 
	(whose proof holds equally well in positive characteristic,)
	the tangent space to the unique point of $\mathscr G^2_d(p)$ will be $0$-dimensional if the multiplication map
	\begin{align*}
		H^0(C, \sco_C(1)) \otimes H^0(C, K_C \otimes \sco_C(-1)) \ra H^0(C, K_C)
	\end{align*}
	is surjective.
	Under the identification $K_C \cong \sco_C(d-3)$,
	we want to show the map
	\begin{align*}
		H^0(C, \sco_C(1)) \otimes H^0(C, \sco_C(d-4)) \ra H^0(C, \sco_C(d-3))
	\end{align*}
	is surjective.

	To verify this, note that we have a commutative square
	\begin{equation}
		\label{equation:}
		\begin{tikzcd} 
			H^0(\bp^2_k, \sco_{\bp^2_k}(1)) \otimes H^0(\bp^2_k, \sco_{\bp^2_k}(d-4)) \ar {r} \ar {d} & H^0(\bp^2_k, \sco_{\bp^2_k}(d-3)) \ar {d} \\
			H^0(C, \sco_C(1)) \otimes H^0(C, \sco_C(d-4)) \ar {r} & H^0(C, \sco_C(d-3)).
		\end{tikzcd}\end{equation}
	We know that the vertical maps are surjective by projective normality of $C$, as shown in \autoref{lemma:projective-normality}.
	Further, the top horizontal map is surjective, since every degree $d-3$ polynomial is a linear combination of products
	of degree $1$ and degree $d - 3$ polynomials in $3$ variables.
	Therefore, the bottom horizontal map is also surjective, as desired.
\end{proof}

\section{The moduli stack of plane curves}
\label{section:moduli-of-plane-curves}

In \autoref{subsection:definition-of-plane-curves},
we define the moduli stack of plane curves of degree $d$, 
which we will denote $\scp_d$, and verify it is isomorphic to a quotient stack
$[U_d/\pgl_3]$ for $U_d$ the open of the Hilbert scheme parameterizing smooth
plane curves of degree $d$.
In particular, this implies that $\scp_d$ 
is a smooth algebraic stack
of finite type.
Then, in \autoref{subsection:g2d-relation} we relate $\scp_d$ to $\scg^2_g(p)$,
for $p: C \to S$ a smooth proper curve with geometrically connected fibers of
genus $g$, showing we can realize
$S \times_{\mg} \scp_d$ as an open subfunctor of $\scg^2_d(p)$.

\subsection{Defining the stack of plane curves}
\label{subsection:definition-of-plane-curves}

To begin, we define the stack of plane curves.
Recall that a Brauer-Severi scheme of dimension $n$ over a base $T$ is a scheme
$P \to T$ so that there is an \'etale cover $T' \to T$ with $P \times_T T'
\simeq \mathbb P^n_{T'}$.
\begin{definition}
	\label{definition:plane-curves}
	Let $d \geq 1$ be an integer.
	Let $g := \binom{d-1}{2}$.
	Define the {\bf moduli stack of degree $d$ plane curves} denoted $\scp_d$
	to be the fibered category of
	triples $(f, P, h)$ where 
	$f :C \ra S$ is a smooth proper curve with geometrically connected genus
	$g$ fibers,
		$P$ is a Brauer-Severi scheme of dimension $2$ over $T$ and $h: C \to P$
	a closed embedding over $T$.

	A morphism 
	$(f': C' \ra S', P', h') \ra (f:C \ra S, P, h)$ is a triple of maps
	$(S' \to S, j: C' \to S' \times_S C, i:P' \to S' \times_S P)$ so
	and $i$ and $j$ are isomorphisms and $j$ restricts to $i$ in the sense
	that $i \circ h' = h \circ j$.
	This makes $\scp_d$ into a fibered category over the category of schemes
	by sending $(f: C \to S, P,h)$ to $S$.
\end{definition}

It is not immediately obvious $\scp_d$ is a stack, but we can verify this using
an alternate definition.
\begin{notation}
	\label{notation:}
	Let $H_d$ denote the Hilbert scheme of degree $d$ plane curves. 
	Abstractly, this is isomorphic to $\mathbb P^{\binom{d+2}{2}-1}$
	corresponding to the $\binom{d+2}{2}$ coefficients of polynomials of
	degree $d$ in $3$ variables.
	Let $U_d \subset H_d$ denote the open subscheme parameterizing smooth
	plane curves of degree $d$.
	This has an action of $\on{PGL}_3 = \on{Aut}(\mathbb P^2)$ where $\alpha \in
	\on{PGL}_3(T)$ sends a plane
	curve $C \xrightarrow{h} \mathbb P^2_T$ to the composite $C
	\xrightarrow{h} \mathbb P^2_T \xrightarrow{\alpha} \mathbb P^2_T$.
	We let $[U_d/\pgl_3]$ denote the resulting quotient stack.
\end{notation}

\begin{proposition}
	\label{proposition:plane-curves-to-pd}
	For $d \geq 4$,
	there is an equivalence $\scp_d \simeq [U_d/\pgl_3]$.
\end{proposition}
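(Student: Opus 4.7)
The plan is to construct explicit functors in both directions, using the standard presentation of a quotient stack together with the Brauer--Severi correspondence. Recall that an $S$-point of $[U_d/\pgl_3]$ is the data of a $\pgl_3$-torsor $\sigma: T \to S$ and a $\pgl_3$-equivariant map $\varphi: T \to U_d$, while $\pgl_3$-torsors over $S$ are in natural bijection with Brauer--Severi schemes $P \to S$ of dimension $2$ via $P \mapsto \isom_S(\bp^2_S, P)$ with inverse $T \mapsto T \times^{\pgl_3} \bp^2$. Using this, I would define $\Phi : [U_d/\pgl_3] \to \scp_d$ by sending $(T \to S, \varphi)$ to the triple $(C \to S, P, h)$, where $P = T \times^{\pgl_3} \bp^2$ is the associated Brauer--Severi scheme and the closed embedding $h: C \hookrightarrow P$ is obtained by applying the associated-bundle construction to the $\pgl_3$-equivariant universal closed embedding $\scc_{\mathrm{univ}} \hookrightarrow U_d \times \bp^2$ over $U_d$. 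In the other direction, $\Psi : \scp_d \to [U_d/\pgl_3]$ sends $(f: C \to S, P, h)$ to the torsor $T := \isom_S(\bp^2_S, P)$ together with the classifying morphism $\varphi : T \to U_d$ associated to the family $C \times_S T \hookrightarrow P \times_S T \simeq \bp^2_T$ obtained by pullback.

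The first step that requires actual verification is that $\varphi$ factors through $U_d$. Flatness, smoothness, properness, and geometric connectedness of the fibers of $C \times_S T \to T$ follow by base change from the corresponding properties of $f$, producing a flat family of smooth proper geometrically connected closed subschemes of $\bp^2_T$ of genus $g = \binom{d-1}{2}$. Since a smooth plane curve of degree $e$ has genus $\binom{e-1}{2}$, and since $\binom{e-1}{2}$ is strictly increasing in $e$ for $e \geq 1$, each geometric fiber must have degree exactly $d$; the Hilbert polynomial is therefore constant and equal to that of a smooth degree $d$ plane curve, so $\varphi$ lands in the open subscheme $U_d$ of the Hilbert scheme.

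Finally, checking that $\Phi$ and $\Psi$ are quasi-inverse is essentially formal: starting from $(T, \varphi)$, the torsor $\isom_S(\bp^2_S, T \times^{\pgl_3} \bp^2)$ is canonically isomorphic to $T$ and the classifying map is recovered from the universal family, while $\Phi \circ \Psi$ reconstructs $(C, P, h)$ by descending $C \times_S T \hookrightarrow \bp^2_T$ along $T \to S$. I expect the main obstacle to lie in the degree identification used above, since that is the one step where the hypothesis $g = \binom{d-1}{2}$ is actively invoked; the rest is the bookkeeping needed to match a $\scp_d$-morphism (an isomorphism of curves together with a compatible isomorphism of Brauer--Severi schemes over a base change) with the corresponding morphism of torsors equipped with equivariant maps to $U_d$, which follows formally from the $\pgl_3$-torsor/Brauer--Severi dictionary and faithfully flat descent.
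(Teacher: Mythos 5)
Your proposal is correct and follows essentially the same route as the paper: both directions are given by the $\pgl_3$-torsor/Brauer--Severi dictionary (the $\isom$ torsor one way, the universal family over $U_d$ descended along the quotient the other), with the genus $g=\binom{d-1}{2}$ pinning down the degree of the embedded fibers. One tiny quibble: $\binom{e-1}{2}$ is not strictly increasing for $e\geq 1$ (degrees $1$ and $2$ both give genus $0$), but since $d\geq 4$ forces $g\geq 3$ the degree is still uniquely determined, so the argument stands.
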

\begin{proof}
	First, we describe the map $\scp_d \to [U_d/\pgl_3]$.
		To give a map $T \to [U_d/\pgl_3]$ we need to specify a $\pgl_3$ torsor
	$R$ over $T$ with a $\pgl_3$ equivariant map $R \to U_d$ using the data
	of a map $T \to \scp_d$, corresponding to $C \to P \to T$ as in 
	\autoref{definition:plane-curves}.

	Let $R$ denote the $\pgl_3$ torsor
	$\on{Isom}_T(P, \mathbb P^2_T)$ over $T$.
	This is a $\pgl_3$ torsor because the pullback of $P$ to $R$ is $\mathbb
	P^2_R$ and so one can identify $R \times_T R$ with $\on{Isom}_R(P_R,
	\mathbb P^2_R) \simeq \on{Isom}_R(\mathbb P^2_R,
	\mathbb P^2_R) \simeq \pgl_{3,R}.$
	By composing the base change of the given embedding $C \to P$ with the isomorphism $P_R \to
	\mathbb P^2_R$ over $R,$ we obtain an embedding $C_R \to P_R \to \mathbb
	P^2_R$ realizing $C_R$ as a degree $d$ plane curve (since $C$ has genus
		$g = \binom{d-1}{2}$ and is embedded as a curve in $\mathbb
	P^2$). This yields the desired $\pgl_3$ equivariant map $R \to U_d$.
	Since this map sends the automorphisms of the map $T \to \scp_d$ to
	automorphisms of the corresponding point of $[U_d/\pgl_3]$,
	this defines the map $\scp_d \to [U_d/\pgl_3]$.

	We now want to construct the inverse map.
	Recall the universal property of the Hilbert scheme $U_d$. A map $T \to
	U_d$ is the same data as a curve $C \to \mathbb P^2_T$ over $T$, flat
	over $T$, which is a smooth degree $d$ plane curve on each geometric
	fiber. Equivalently, this is a smooth, geometrically connected curve $C \to
	T$ with an embedding $C \to \mathbb P^2_T$ over $T$.
	In particular, such a curve $C \to T$ is pulled back from the universal
	family of smooth plane curves with embeddings into $\mathbb P^2_{U_d}$,
	$C_d \to \mathbb P^2_{U_d}$, over $U_d$.
	That is, $C = T \times_{U_d} C_d$.
	There are compatible actions of $\pgl_3$ on $C_d \to \mathbb P^2_{U_d}
	\to U_d$ giving us a diagram $[C_d/\pgl_3] \xrightarrow{h} \mathbb
	[P^2_{U_d}/\pgl_3] \xrightarrow{\pi}
	[U_d/\pgl_3]$, and we call $f$ the composite map.
	Note that $f$ is representable by a smooth proper curve with
	geometrically connected fibers of genus $g$, $\pi$ is a relative Brauer
	Severi scheme, and $h$ is a closed embedding, as these may all be
	verified after pullback to the smooth cover $U_d$.

Now, pulling back these stacks along a map $T \to [U_d/\pgl_3]$ we get $C \to P
\to T$ with $C = T \times_{[U_d/\pgl_3]} [C_d/\pgl_3] $ and $P =
T\times_{[U_d/\pgl_3]}[P^2_{U_d}/\pgl_3]$, with $C$ a smooth proper curve and
$P$ a Brauer-Severi scheme of dimension $2$ over $T$, together with a closed
embedding $C \to P$. This is precisely the data of a map $T \to \scp_d$.
Since the Brauer-Severi scheme becomes trivialized along the cover $\isom_T(P,
\mathbb P^2_T)$ this construction $[U_d/\pgl_3] \to \scp_d$ is inverse to the
previous map $\scp_d \to [U_d/\pgl_3]$, and so both are equivalences.
\end{proof}

\subsection{Relating the stack of plane curves to $\scg^2_d(p)$}
\label{subsection:g2d-relation}

Having define $\scp_d$, we next relate it to the scheme $\scg^2_d(p)$.
Recall the definition of the stack $\scg^r_d(p)$ for $p:C \ra S$ a smooth proper
curve with geometrically connected genus $g$ fibers given in \autoref{definition:grd}
and recall that it is representable when $p$ has a section,
by \autoref{theorem:representability-of-grd}.

\begin{definition}
	\label{definition:sch-and-sck}
	First, we define $\locfreelocus^2_d(p)$ to be the subfunctor of $\scg^2_d(p)$ which associates to any $S$ scheme $T$ pairs $\left( \scl, \sch \right)/\sim$
	as in \autoref{definition:grd} with the additional condition that $p_{T*}\scl$ is locally free of rank $3$
	and $p_T^* p_{T*} \scl \ra \scl$ is surjective.
	
	Next, we define $\sck^2_d(p)$ to be the subfunctor of $\locfreelocus^2_d(p)$ which associates to any $S$ scheme $T$
	pairs $\left( \scl, \sch \right)/\sim$ as in \autoref{definition:grd} with the additional conditions that $p_{T*}\scl$ is locally
	free of rank $3$, that  and that the resulting morphism $C \ra \bp
(p_{T*}\scl)$ is a closed embedding.
\end{definition}

In the next several lemmas, we show $\sck^2_d(p) \subset \locfreelocus^2_d(p)
\subset \scg^2_d(p)$ are open embeddings.

\begin{lemma}
	\label{lemma:f-to-g}
	Suppose $p:C \ra S$ is a smooth proper curve with geometrically
	connected genus $g$ fibers with a
	section. The natural map $\locfreelocus^2_d(p) \to \scg^2_d(p)$ is an
	open embedding.
	In particular, $\locfreelocus^2_d(p)$ is representable by a scheme.
\end{lemma}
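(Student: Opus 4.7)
The plan is to exhibit $\locfreelocus^2_d(p)$ as the locus in $\scg^2_d(p)$ cut out by two successive open conditions on the base $T$. Concretely, given a map $T \to \scg^2_d(p)$ classifying $(\scl, \sch)$, I will show that (a) the locus where $p_{T*}\scl$ is locally free of rank $3$ (forcing $\sch = p_{T*}\scl$) is open in $T$, and (b) on that locus, the further condition that $p_T^* p_{T*}\scl \to \scl$ be surjective is also open.

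For (a), note that the injection $\iota^*\sch \hookrightarrow p_{t*}\iota_C^*\scl$ from the definition of $\scg^2_d(p)$ gives $h^0(C_t,\scl_t) \geq 3$ at every $t \in T$. By upper semicontinuity applied to $R^0p_{T*}\scl$ (and using that $\chi(C_t,\scl_t) = d - g + 1$ is constant by Riemann--Roch on fibers), the locus $T^\circ \subset T$ where $h^0(C_t,\scl_t) = 3$ is open. On $T^\circ$ the function $h^0$ is constant, so by the cohomology and base change theorem both $R^0p_{T*}\scl$ and $R^1p_{T*}\scl$ are locally free and their formation commutes with base change; in particular $p_{T*}\scl$ is locally free of rank $3$. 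Then $\sch \hookrightarrow p_{T*}\scl$ is an injection of rank $3$ locally free sheaves which becomes an isomorphism on every fiber (both fibers equal $H^0(C_t,\scl_t)$), hence is an isomorphism by Nakayama.

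For (b), working over $T^\circ$ so that $\scl$ is flat over $T$ and $p_{T*}\scl$ is a locally free sheaf whose formation commutes with base change, consider the evaluation morphism $\varphi : p_T^* p_{T*}\scl \to \scl$ on $C_T$. The support of $\on{coker}(\varphi)$ is a closed subscheme of $C_T$, and since $p_T : C_T \to T$ is proper, its image in $T$ is a closed subset. Its complement is precisely the locus where $\varphi$ is surjective on every fiber, which by Nakayama is the open locus where $\varphi$ is surjective. This is exactly $\locfreelocus^2_d(p)$ inside $T^\circ$.

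Combining (a) and (b), the inclusion $\locfreelocus^2_d(p) \hookrightarrow \scg^2_d(p)$ is a monomorphism of fibered categories whose base change along any map $T \to \scg^2_d(p)$ is an open immersion, so it is an open embedding. Representability of $\locfreelocus^2_d(p)$ by a scheme then follows from \autoref{theorem:representability-of-grd}. The only mild subtlety is verifying that $\sch = p_{T*}\scl$ automatically on the open locus of (a); the rest is a standard application of semicontinuity and properness of $p_T$.
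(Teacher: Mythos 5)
Your proof is correct and takes essentially the same approach as the paper: both realize $\locfreelocus^2_d(p)$ as the locus in (the scheme representing) $\scg^2_d(p)$ cut out by two open conditions, with the surjectivity condition handled exactly as in the paper via the support of the cokernel of the evaluation map $p_T^*p_{T*}\scl \to \scl$ (you rightly make explicit the properness of $p_T$ needed to push that closed subset of the total space down to the base). Your step (a) supplies a justification for openness of the rank-$3$ condition that the paper merely asserts; the only soft spot is that Grauert's theorem as you invoke it requires a reduced base, so over a general Noetherian $T$ one should argue instead via cohomology and base change for curves (the comparison map $\phi^1$ is automatically surjective because $H^2$ of a fiber vanishes), but this is a refinement of, not a departure from, the paper's argument.
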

\begin{proof}
	We know $\scg^2_d(p)$ is representable by some scheme $X$ with a universal invertible sheaf $\scl_X$ on the universal
	curve $p_X: C_X \ra X$ by \autoref{theorem:representability-of-grd}.
	It follows that $\locfreelocus^2_d(p)$ is then represented by the open subscheme of $X$ on which $p_{X*}\scl_X$ is locally free of rank $3$
	and the map $p_X^* p_{X*} \scl_X \ra \scl_X$ is surjective; this locus is
		open because it is the complement of the support of the cokernel
		of $p_X^* p_{X*} \scl_X \to \scl_X$,
	and the support of a sheaf is a closed locus.
\end{proof}
\begin{lemma}
	\label{lemma:k-to-f}
	Suppose $p:C \ra S$ is a smooth proper curve with geometrically
	connected fibers of genus $g$ with a
	section.
	The natural map $\mathscr K^2_d(p) \to \locfreelocus^2_d(p)$ is an open
	embedding.
	In particular, $\mathscr K^2_d(p)$ is representable by a scheme.
\end{lemma}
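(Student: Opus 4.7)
The plan is to use \autoref{lemma:f-to-g} to realize $\locfreelocus^2_d(p)$ as a scheme $Y$ carrying the universal data of a smooth proper curve $p_Y \colon C_Y \to Y$, an invertible sheaf $\scl_Y$ on $C_Y$, and a surjection $p_Y^* (p_Y)_* \scl_Y \twoheadrightarrow \scl_Y$. This surjection determines a $Y$-morphism $h_Y \colon C_Y \to P_Y := \bp((p_Y)_* \scl_Y)$ to a relative Brauer-Severi scheme of dimension $2$, and $h_Y$ is automatically proper. The goal is to produce an open subscheme $K \subset Y$ over which $h_Y|_K$ is a closed embedding; such a $K$ will represent $\sck^2_d(p)$, since for any $T \to Y$ the pulled-back $h_T$ is a closed embedding if and only if $T \to Y$ factors through $K$. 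One direction is stability of closed immersions under pullback; for the other direction, if $h_T$ is a closed embedding then each geometric fiber $h_t$ is a closed embedding, which by faithfully flat descent forces $h_y$ to be a closed embedding at the image $y$ of $t$, placing $y$ in $K$.

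To construct $K$, I would decompose the condition into two open subsets of $Y$. First, let $U_1 \subset Y$ be the complement of the image under the proper map $p_Y$ of the closed support of $\Omega^1_{C_Y / P_Y}$; this is the open unramified locus of $h_Y$. Over $U_1$, the relative diagonal $\Delta \colon C_Y \to C_Y \times_{P_Y} C_Y$ is both an open immersion, because $h_Y$ is unramified, and a closed immersion, because $h_Y$ is separated. Hence $Z := (C_Y \times_{P_Y} C_Y) \setminus \Delta(C_Y)$ is closed in $C_Y \times_{P_Y} C_Y$, and its image in $U_1$ under the proper projection is closed; let $U_2 \subset U_1$ be its complement.

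Over $U_2$, the diagonal $\Delta$ is open-and-closed and fiberwise surjective, hence an isomorphism, so $h_Y|_{U_2}$ is a proper, unramified monomorphism. By the standard fact that a proper monomorphism of schemes is a closed immersion, $h_Y|_{U_2}$ is a closed embedding, and we may take $K = U_2$. The one step requiring care is this last appeal to proper-monomorphism-equals-closed-immersion, which is what upgrades the fiberwise closed embedding condition into a global closed embedding of the total family over $K$; the rest of the argument is a routine combination of properness and coherent support.
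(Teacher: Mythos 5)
Your proposal is correct, and it follows the same overall strategy as the paper: invoke \autoref{lemma:f-to-g} to represent $\locfreelocus^2_d(p)$ by a scheme $Y$, build the universal morphism $C_Y \to \bp((p_Y)_*\scl_Y)$, and then exhibit the locus where this is a closed embedding as the complement in $Y$ of the image of a closed subset under a proper map. The difference is in which closed subset you use. The paper takes $Z = \phi^{-1}(\supp \coker(\sco_{\bp((p_Y)_*\scl_Y)} \to \phi_*\sco_{C_Y}))$ and removes $p_Y(Z)$, an argument which implicitly relies on $\phi$ being finite (which holds here because $\scl$ has positive degree on fibers, so no fiber component is contracted) to conclude that surjectivity of $\sco \to \phi_*\sco_{C_Y}$ yields a closed immersion. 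You instead cut out the unramified locus via $\supp \Omega^1_{C_Y/P_Y}$ and the universally injective locus via the off-diagonal part of $C_Y \times_{P_Y} C_Y$, and then close with the fact that a proper unramified monomorphism is a closed immersion. Your route is somewhat longer but more self-contained: it makes explicit the fiberwise criterion (a point $y$ lies in your $K$ if and only if $h_y$ is a closed embedding, whence the functorial characterization of $K$ by descent in one direction and base change of closed immersions in the other), and it does not need the finiteness of $\phi$ as a separate input. Both arguments are valid; yours would be the more robust one to write down in detail.
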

\begin{proof}
	Using \autoref{lemma:f-to-g}, we can assume $\locfreelocus^2_d(p)$, is represented by some scheme $Y$. 
	We first claim there is a universal family $p_Y: C_Y \to Y$
	together with a map $C_Y \to \mathbb P(p_{Y*} \scl)$ over $Y$.
	To see this, note that because $p_{Y*}\scl$ is locally free of rank $3$, we may
	construct the relative proj $\bp (p_{Y*}\scl)$. (Since the bundle
		$p_{Y*} \scl$ has rank $3$, locally on $Y$ this can be
	identified with $\mathbb P^2$.)
	The condition that $p_Y^* p_{Y*} \scl \ra \scl$ is surjective implies that the sheaf $\scl$ is basepoint free,
	and hence we obtain a resulting morphism $C_Y \ra \bp (p_{Y*}\scl)$.

Since $\sck^2_d(p)$ is the subfunctor of $\locfreelocus^2_d(p)$ where
	the above map 
$C_Y \ra \bp (p_{Y*}\scl)$
is a closed embedding,
	to show $\sck^2_d(p) \ra \locfreelocus^2_d(p)$ is an open
	embedding, it suffices to show there is an open locus
	over which
	the map $C_Y \ra \bp(p_{Y*} \scl)$ is a closed embedding.
	This is intuitively clear because the locus on which
	it is not a closed embedding should be the one in which
	the degree of some fiber of the map is more than $1$.
	We formally codify this as follows:
	Let $\scv$ denote the cokernel of the resulting map $\sco_{\bp p_{Y*} \scl_Y} \ra \phi_* \sco_{C_Y}$.
	Let $W := \supp \scv \subset \bp (p_{Y*} \scl_Y)$ denote the support of $\scv$ and let
	$Z := \phi^{-1}(W)$.
	Then, it follows that $Z$ is a closed subscheme of $C_Y$ and $Y - p_Y(Z)$ represents the functor $\sck^2_d(p)$.
\end{proof}
The final result of this section relates $\sck^2_d(p)$ to $\scp_d$.
\begin{proposition}
	\label{proposition:fiber-product-is-scheme}
		Suppose $p:C \ra S$ is a smooth proper curve with geometrically
		connected fibers of genus $g$ with
		a section, corresponding to a map $S \ra \mg$.
		There is an equivalence $\sck^2_d(p) \simeq S \times_{\mg}
		\scp_d$.
		In particular, 
		$S \times_{\mg} \scp_d$ is a scheme.
\end{proposition}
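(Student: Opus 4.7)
The plan is to construct mutually inverse functors between $S \times_\mg \scp_d$ and $\sck^2_d(p)$, working at the level of $T$-points. A $T$-point of $S \times_\mg \scp_d$ consists of a smooth proper genus $g$ curve $p_T : C_T \to T$ (pulled back from $p$, hence inheriting a section $T \to C_T$), a Brauer--Severi scheme $\pi : P \to T$ of relative dimension $2$, and a closed embedding $h : C_T \hookrightarrow P$ over $T$. The key observation is that composing the section of $p_T$ with $h$ produces a section $T \to P$ of $\pi$. The low-degree terms of the Leray spectral sequence for $\pi_* \sco_P^\times$ give an exact sequence
\[
\pic(T) \to \pic(P) \to \bz \xrightarrow{\cdot [P]} \on{Br}(T) \to \on{Br}(P),
\]
in which the existence of a section of $\pi$ forces $\pi^*$ on Brauer groups to be split injective. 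Hence the Brauer class $[P]$ vanishes, $P \simeq \bp(\sch_0)$ for some rank $3$ vector bundle $\sch_0$ on $T$, and in particular the line bundle $\sco_P(1)$ is defined globally on $P$.

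I would then define the map $S \times_\mg \scp_d \to \sck^2_d(p)$ by sending $(C_T, P, h)$ to the equivalence class of the pair $(\scl, \sch) := (h^* \sco_P(1),\, \pi_* \sco_P(1))$. To verify this lands in $\sck^2_d(p)$: fiberwise $\scl_t$ is the restriction of $\sco_{\bp^2}(1)$ to a smooth degree $d$ plane curve, which has $h^0 = 3$ by projective normality (\autoref{lemma:projective-normality}), so cohomology and base change give that $p_{T*}\scl$ is locally free of rank $3$ and canonically identifies with $\sch$; the surjection $p_T^* p_{T*}\scl \twoheadrightarrow \scl$ is the pullback along $h$ of the tautological surjection $\pi^*\pi_*\sco_P(1) \twoheadrightarrow \sco_P(1)$ on $P = \bp(\sch_0)$; and the resulting map $C_T \to \bp(p_{T*}\scl) \simeq P$ is the original closed embedding $h$. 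Two different presentations of $P$ as a projective bundle differ by a line bundle twist $\scq$ on $T$, which twists $(\scl, \sch)$ by $(p_T^*\scq, \scq)$---precisely the equivalence relation $\sim$---so the construction descends to equivalence classes.

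Conversely, I would send an equivalence class $[(\scl, \sch)] \in \sck^2_d(p)(T)$ to the triple $(C_T, \bp(\sch), h)$, where $h : C_T \hookrightarrow \bp(\sch) \simeq \bp(p_{T*}\scl)$ is the closed embedding built into the definition of $\sck^2_d(p)$ (using that $\sch \hookrightarrow p_{T*}\scl$ is an isomorphism of rank $3$ locally free sheaves, as one sees by a fiberwise dimension count). Since $\bp(\sch \otimes \scq) \simeq \bp(\sch)$ canonically, this is well-defined on equivalence classes. Checking that the two constructions are mutually inverse reduces to the canonical identifications $\bp(\pi_*\sco_P(1)) \simeq P$ and $h^*\sco_{\bp(\sch)}(1) \simeq \scl$. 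The main obstacle---and the reason the section hypothesis on $p$ is essential---is the vanishing of the Brauer class of $P$; without it, $\sco_P(1)$ would not in general exist globally on $P$, so one could not produce a line bundle $\scl$ on $C_T$ from the embedding $h$, and the forward map would fail to be defined.
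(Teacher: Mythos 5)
Your proposal is correct and follows essentially the same route as the paper: both constructions hinge on the observation that composing the section of $p_T$ with the embedding $h$ yields a section of the Brauer--Severi scheme $P \to T$, forcing its Brauer class to vanish so that $\sco_P(1)$ exists and $P \simeq \bp(\pi_*\sco_P(1))$. Your write-up is if anything slightly more explicit than the paper's (you spell out the Leray/Brauer exact sequence and the base-change argument identifying $\sch$ with $p_{T*}\scl$, where the paper argues via full faithfulness and essential surjectivity), but the underlying idea is the same.
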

\begin{proof}
	It follows from \autoref{lemma:k-to-f} that $S \times_{\mg} \scp_d$ is a
	scheme since $\sck^2_d(p)$ is.
	The map $\sck^2_d(p) \to S \times_{\mg} \scp_d$ is given by as follows: On a $T$ point of $\sck^2_d(p)$, we
	obtain a smooth proper curve with geometrically connected genus $g$
	fibers $p_T: C \to T$ and an invertible sheaf $\scl$ 
	such that the induced map $h: C \to \mathbb P ((p_T)_* \mathscr L)$
	is an embedding.
	Hence, we can associate to this the $T$ point of $S \times_{\mg} \scp_d$
	corresponding to $(p_T : C \to S, \mathbb P((p_T)_* \mathscr L), h: C
	\to \mathbb P ((p_T)_* \mathscr L)).$

	This induces a map of stacks, which we want to show is an equivalence.
	For this, it suffice to show this map is fully faithful and essentially surjective on
	$T$ points for all schemes $T$ \cite[Proposition 3.1.10]{olsson2016algebraic}.
	Full faithfulness follows from the definitions of $\scp_d$ and
	$\sck^2_d(p)$, as the morphisms of objects in both cases are given by
	the same data (isomorphisms of the projective bundle or Brauer-Severi
	scheme restricting to an isomorphism on the embedded curves).
	Hence, it only remains to check essential surjectivity. That is, given
	$C \to T$ with an embedding $C \to P$ for $P$ a Brauer-Severi scheme
	over $T$, we wish to show that $P$ is actually a trivial Brauer-Severi
	scheme. That is, we want so who $P \simeq \bp \sce$ for some invertible sheaf $\mathscr
	E$.
	This is where we use that we are assuming $C \to T$ has a section
	$\sigma: T \to C$. Composing $\sigma$ with the given map $h: C \to P$, we
	obtain a section $\sigma: T \to C \to P$, implying that $P$ is a trivial
	Brauer-Severi scheme (the section induces a line in the Brauer-Severi
		scheme inverse to $P$, implying the class of the inverse of $P$
	is trivial and so $P$ also corresponds to the trivial Brauer-Severi
scheme). Hence,
	$P \simeq \bp \sce$ for some invertible sheaf $\mathscr E$. This then
	induces an invertible sheaf $\mathscr O_{\mathscr E}(1)$ on $C$ and the
	invertible sheaf $\mathscr L$ from the definition of $\sck^2_d(p)$ can
	be taken to be $h^*(\mathscr O_{\mathscr E}(1))$.
\end{proof}

\section{Verifying that $\scp_d \ra \mg$ is a locally closed embedding}
\label{section:locally-closed-embedding}

Let $g := \binom{d-1}{2}$.
Our next goal is to verify that the natural map $\scp_d \ra \mg$, sending a pair $(f: C \ra T, \scl) \mapsto (f: C \ra T)$ is a locally closed embedding.

We do this in several steps, completing the proof in \autoref{theorem:locally-closed-embedding}.
\begin{enumerate}
	\item We show that in the case $f:C \ra T$ has a section, the resulting
		fiber product $T \times_{\mg} \scp_d$ is in fact a scheme (we
		did this in \autoref{proposition:fiber-product-is-scheme}).
	\item We show that the map $\scp_d \ra \mg$ is a monomorphism (in \autoref{proposition:monomorphism}).
	\item We show that $\scp_d \ra \mg$ is a locally closed embedding, using
		the (little known!) valuative criterion for locally closed
		embeddings (in \autoref{theorem:locally-closed-embedding}).
\end{enumerate}

Now, since $\mg$ is Deligne-Mumford, there is an \'etale cover $T \ra \mg$.
This corresponds to a smooth proper curve with geometrically connected genus $g$
fibers $C \ra T$.
After choosing a further \'etale cover of $T$, we may assume that $p:C \ra T$ has a section.
In this case, we know
from 
\autoref{theorem:representability-of-grd}
that the functor $\scg^2_d(p)$ is representable by a scheme.
That is, the fiber product
\begin{equation}
	\label{equation:tau-definition}
	\begin{tikzcd} 
		T \times_{\mg} \scp_d \ar {r}{\tau} \ar {d} & T \ar {d} \\
		\scp_d \ar {r} & \mg
	\end{tikzcd}\end{equation}
is a scheme, using \autoref{proposition:fiber-product-is-scheme}.
Hence, in order to verify our map $\scp_d \ra \mg$ is a locally closed embedding,
we only need check $\tau$ is a locally closed embedding.
For this, we will use the following valuative criterion
for locally closed embeddings.

\begin{lemma}[Valuative criterion for locally closed embeddings, ~\protect{\cite[Chapter 1, Corollary 2.13]{mochizuki2014foundations}}]
	\label{lemma:functorial-criterion-for-locally-closed-embedding}
	Suppose $S$ is a Noetherian scheme and $X$ and $Y$ are $S$-schemes of finite type.
	A morphism $f: X \ra Y$ is a locally closed embedding if and only if $f$ is a monomorphism and the following
	condition holds:
	For all discrete valuation rings $R$ with fraction field $K$ and residue field $\kappa$,
	and all maps $g:\spec R \ra Y$
	with commutative diagrams
	\begin{equation}
		\label{equation:}
		\begin{tikzcd} 
			\spec K \ar {r} \ar {d} & X \ar {d}{f} & \spec \kappa \ar{r} \ar{d} & X \ar{d}{f} \\
			\spec R \ar {r}{g} & Y & \spec R \ar{r}{g} & Y,
		\end{tikzcd}\end{equation}
	there exists a unique morphism $h:\spec R \ra X$ making the diagrams	
	\begin{equation}
		\label{equation:}
		\begin{tikzcd} 
			\spec K \ar {r} \ar {d} & X \ar {d}{f} & \spec \kappa \ar{r} \ar{d} & X \ar{d}{f} \\
			\spec R \ar {r}{g}\ar{ur}{h} & Y & \spec R \ar{r}{g}\ar{ur}{h} & Y
		\end{tikzcd}\end{equation}
	commute.
\end{lemma}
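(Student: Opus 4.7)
The plan is to prove the two implications separately. The forward direction is routine; the reverse direction is the substantive part and carries the main obstacle.

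For the forward direction, suppose $f\colon X \to Y$ is a locally closed embedding. Factor $f = j \circ i$ with $i\colon X \hookrightarrow \ol{X}$ an open embedding and $j\colon \ol{X} \hookrightarrow Y$ a closed embedding, where $\ol{X}$ is the scheme-theoretic closure of the image. Since $j$ is a proper monomorphism, the classical valuative criterion for properness yields a unique lift $\spec R \to \ol{X}$ of the given $g$. But $\spec R$ has only two points, and both are required by hypothesis to land in the open subscheme $X \subset \ol{X}$, so the lift factors uniquely through $X$. Uniqueness of $h$ is automatic from $f$ being a monomorphism.

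For the reverse direction, let $Z \subset Y$ be the scheme-theoretic image of $f$, a closed subscheme through which $f$ factors. It suffices to show $X \to Z$ is an open embedding, for then $X \hookrightarrow Z \hookrightarrow Y$ realizes $f$ as a locally closed embedding. The induced map $X \to Z$ is a dominant monomorphism of finite type between Noetherian schemes, so by Zariski's Main Theorem it factors as $X \hookrightarrow X' \to Z$ with the first map a quasi-compact open embedding and the second finite. Because $f$ is a monomorphism, one checks that $X' \to Z$ is itself a finite monomorphism, hence a closed embedding. Then the valuative criterion is used to upgrade the open embedding $X \hookrightarrow X'$ to an isomorphism: a point of $X'$ not in the image of $X$ would provide a closed point of $Z$ whose DVR lifts extend to $X'$ but not to $X$, contradicting the hypothesized valuative lifting.

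The main obstacle is precisely this last step, namely turning pointwise valuative lifts into surjectivity of $X \hookrightarrow X'$. The valuative criterion only supplies lifts over individual discrete valuation rings, so to upgrade to an isomorphism of schemes one must combine it with the Noetherian finiteness of $X' \to Z$ (so that every closed point of $X'$ arises from some $R$-valued point of $Z$) and with the uniqueness clause (so that the pointwise lifts glue into a genuine morphism on an open). Once this is established, the composition $X \simeq X' \hookrightarrow Z \hookrightarrow Y$ finishes the argument.
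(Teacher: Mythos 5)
Your forward direction is fine (modulo the small remark that one needs $\ol{X}\to Y$ to be a monomorphism to know that the lift $\spec R\to \ol X$ hits the \emph{same} point of $\ol X$ over $\spec\kappa$ as the hypothesized lift to $X$, so that the closed point really lands in the open subscheme $X$). The reverse direction, however, has a genuine gap at the step ``because $f$ is a monomorphism, one checks that $X'\to Z$ is itself a finite monomorphism.'' This is not a consequence of $f$ being a monomorphism, and it is false even under the full hypotheses of the lemma. Take $Y=Z=C$ a nodal cubic and $X=C\setminus\{\mathrm{node}\}$, so that $f$ is an open immersion and satisfies everything. The standard ZMT hull (the normalization of $Z$ in $X$, which already has $X$ dense in it) is the full normalization $X'=\widetilde C$, and $\widetilde C\to C$ is finite but has two points over the node, hence is not a monomorphism. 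Being a monomorphism constrains the fibers of $f$ over $f(X)$ but says nothing about the boundary points of $X'$; this is exactly where the valuative hypothesis must be used, and your argument does not use it there.

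The final step is also problematic in two ways. First, if $X'\to Z$ really were a closed embedding, then $X\hookrightarrow X'\hookrightarrow Z\hookrightarrow Y$ would already exhibit $f$ as an open immersion followed by closed immersions, i.e.\ a locally closed embedding, so there would be nothing left to prove; and if moreover $X\to X'$ were an isomorphism, $f$ would be a \emph{closed} immersion, which is absurd. Second, the claimed isomorphism fails: for $f:\mathbb A^1\setminus\{0\}\to\mathbb A^1$, which satisfies all hypotheses of the lemma, one has $X'=Z=\mathbb A^1$ and $X\to X'$ is not surjective. The purported contradiction evaporates because a point of $X'\setminus X$ lies over a point of $Y$ whose residue-field point does \emph{not} lift to $X$, so no DVR centered there satisfies the hypothesis of the criterion, which requires \emph{both} the generic and the special point of $\spec R$ to lift to $X$.

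The criterion is instead used as follows. With $X\xrightarrow{j}X'\xrightarrow{\pi}Y$ from ZMT and $X$ dense in $X'$, show $f(X)\cap\pi(X'\setminus X)=\emptyset$: if $x\in X$ and $x'\in X'\setminus X$ had $\pi(x')=f(x)$, pick a generic point $\xi$ of $X'$ specializing to $x'$ (so $\xi\in X$ by density) and a DVR $\spec R\to X'$ realizing $\xi\rightsquigarrow x'$. Both points of $g=\pi\circ(\spec R\to X')$ then lift to $X$ (via $\xi$ and via $x$), so the criterion yields $h:\spec R\to X$; since $\pi$ is separated and $\spec R$ is integral, $j\circ h$ agrees with the original map $\spec R\to X'$ (they agree at the generic point by uniqueness of lifts through the monomorphism $f$), forcing $x'\in X$, a contradiction. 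Hence $B:=\pi(X'\setminus X)$ is closed ($\pi$ is finite) and $X=\pi^{-1}(Y\setminus B)$, so $X\to Y\setminus B$ is a finite monomorphism, hence a closed immersion, and $f$ is a locally closed embedding. (For what it is worth, the paper itself does not prove this lemma; it cites Mochizuki.)
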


So, we need to verify the map $\tau$ of \eqref{equation:tau-definition} is a monomorphism
and that it satisfies the valuative criterion
of \eqref{lemma:functorial-criterion-for-locally-closed-embedding}.
First, we show it is a monomorphism.

\begin{proposition}
	\label{proposition:monomorphism}
	The map $\tau$ defined in \eqref{equation:tau-definition} 
	is a monomorphism.
\end{proposition}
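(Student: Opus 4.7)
The plan is to use the identification $T \times_{\mg} \scp_d \simeq \sck^2_d(p)$ from \autoref{proposition:fiber-product-is-scheme} to view $\tau$ as a morphism of schemes, and then to prove it is a monomorphism by computing its geometric fibers. My target is to show each geometric fiber is either empty or isomorphic to $\spec k$. This suffices because a morphism of schemes is a monomorphism if and only if it is universally injective and formally unramified, and both properties are immediate when every geometric fiber is either empty or a single copy of $\spec k$ (the former giving the at-most-one-point condition, the latter ruling out any nonzero tangent directions).

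To compute the geometric fibers, I would fix an algebraically closed field $k$ and a geometric point $\bar t: \spec k \to T$ classifying a smooth proper geometrically connected genus $g$ curve $C_{\bar t}$. The fiber of $\tau$ over $\bar t$ is identified with $\sck^2_d(C_{\bar t} \to \spec k)$, which by construction is the locus in $\scg^2_d(C_{\bar t} \to \spec k)$ parameterizing $g^2_d$'s whose associated morphism realizes $C_{\bar t}$ as a closed subscheme of $\bp^2_k$. If this locus is empty, there is nothing to check. Otherwise $C_{\bar t}$ is a smooth plane curve of degree $d$, so \autoref{proposition:unique-g2d} implies the $g^2_d$ is unique (giving at most one closed point) and \autoref{proposition:reduced-point} implies that this point is reduced, so the fiber equals $\spec k$.

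All the genuine content of the monomorphism claim is thus packaged into \autoref{proposition:unique-g2d} and \autoref{proposition:reduced-point}. The hardest ingredient was controlling the positive characteristic case, which \autoref{lemma:no-g-1-m} already handled by observing that the Frobenius twist of a plane curve is again a plane curve, allowing a reduction to the generically separable setting. Once those classical inputs from \autoref{section:classical-facts} are in hand, the fiber computation above produces the monomorphism conclusion directly, and no further work on $\tau$ itself is needed.
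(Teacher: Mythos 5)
Your proposal is correct and follows essentially the same route as the paper: both reduce the monomorphism claim to showing each geometric fiber of $\tau$ is empty or a single reduced point (the paper invokes \cite[Proposition 17.2.6]{EGAIV.4}, you invoke the equivalent ``universally injective and formally unramified'' criterion), and both then feed in \autoref{proposition:unique-g2d} and \autoref{proposition:reduced-point} together with the open inclusion of the fiber product into $\scg^2_d(p)$. Your handling of the empty-fiber case is, if anything, slightly more explicit than the paper's.
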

\begin{proof}
	Using \cite[Proposition 17.2.6]{EGAIV.4} in order to verify the natural map
$\tau$ is a monomorphism, it suffices to verify each fiber
either has degree $0$ or $1$.
This can be verified on geometric points.
So, let $\spec \overline k \ra T$ be some geometric point.
Let $C_{\overline k}$ be the corresponding curve over $\overline k$.
We wish to show the fiber over $\spec \overline k$ has degree $1$.
For this, we will check the fiber is reduced and is supported on a single point.
From the definition of the stack $\scg^2_d(p)$ with $p: C_{\overline k} \ra \spec \overline k$, which has a section
as $\overline k$ is algebraically closed, we see $\scg^2_d(p)$
parameterizes the underlying set of $g^2_d$'s on $C$.
By \autoref{proposition:reduced-point},
$\scg^2_d(p)$ is a degree one scheme.
Since $T \times_{\mg} \scp^2_d \subset \scg^2_d(p)$ is an open subscheme by
\autoref{proposition:fiber-product-is-scheme}, 
it follows that $T \times_{\mg}\scp^2_d$ also has degree at most
$1$, completing the proof.
\end{proof}

We can now prove our main theorem,
by verifying the valuative criterion for locally closed embeddings
holds.

\begin{theorem}
	\label{theorem:locally-closed-embedding}
	The map $\scp_d \ra \mg$ is a locally closed embedding.
\end{theorem}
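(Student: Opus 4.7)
Here is a sketch of the proof I would write.

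The overall plan is to reduce to proving that $\tau : T \times_{\mg} \scp_d \to T$ of \eqref{equation:tau-definition} is a locally closed embedding, and then to apply the valuative criterion \autoref{lemma:functorial-criterion-for-locally-closed-embedding}. The monomorphism hypothesis is already \autoref{proposition:monomorphism}, so the work left is the valuative part: given a DVR $R$ with fraction field $K$ and residue field $\kappa$, a map $g : \spec R \to T$, and factorizations of $g|_{\spec K}$ and $g|_{\spec \kappa}$ through $T \times_{\mg} \scp_d$, I must produce a unique extension $h : \spec R \to T \times_{\mg} \scp_d$. Uniqueness will be immediate from \autoref{proposition:monomorphism}, so the content is existence.

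By \autoref{proposition:fiber-product-is-scheme} I may identify $T \times_{\mg} \scp_d$ with the open subscheme $\sck^2_d(p) \subset \scg^2_d(p)$. The input data gives a smooth proper curve $p_R : C \to \spec R$ (pulled back along $g$), a line bundle $\scl_K$ on $C_K$ realizing $C_K$ as a plane curve, and a line bundle $\scl_\kappa$ on $C_\kappa$ realizing $C_\kappa$ as a plane curve. The first step is to extend $\scl_K$ to a line bundle $\scl$ on the whole of $C$. After possibly passing to a further étale cover of $T$, the curve $p_R$ has a section, so $\pic^d_{C/R}$ is represented by a scheme, and in fact is smooth and proper over $\spec R$ since it is a torsor under the abelian scheme $\pic^0_{C/R}$. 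Properness of $\pic^d_{C/R}$ supplies the extension $\scl$ of $\scl_K$, and the section of $p_R$ ensures that the $\spec R$-point of $\pic^d_{C/R}$ actually comes from an honest line bundle.

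Next, I need to compare $\scl|_{C_\kappa}$ with the prescribed $\scl_\kappa$. Since $C_\kappa$ is a smooth plane curve, \autoref{proposition:unique-g2d} tells us that $C_\kappa$ admits a unique $g^2_d$, which is automatically complete. The restriction $\scl|_{C_\kappa}$ has degree $d$, and it has $h^0 \ge 3$: this follows from semicontinuity of $h^0$ together with the fact that $\scl_K$ is a complete $g^2_d$ on $C_K$ with $h^0(C_K, \scl_K) = 3$. Therefore $\scl|_{C_\kappa}$ and $\scl_\kappa$ are both $g^2_d$'s on $C_\kappa$ and hence isomorphic (up to the equivalence relation of \autoref{definition:grd}). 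Combining the equalities of $h^0$ on both fibers with cohomology and base change, $p_{R*}\scl$ is locally free of rank $3$, and the canonical evaluation map $p_R^* p_{R*}\scl \to \scl$ is surjective by Nakayama since it is surjective on both fibers. Thus $(\scl, p_{R*}\scl)$ defines a map $\spec R \to \locfreelocus^2_d(p)$.

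Finally, I need the induced morphism $C \to \bp(p_{R*}\scl)$ to be a closed embedding, i.e., I need the map to $\locfreelocus^2_d(p)$ to land in the open subscheme $\sck^2_d(p)$ of \autoref{lemma:k-to-f}. Both the generic point and the closed point of $\spec R$ land in $\sck^2_d(p)$ by hypothesis, and since $\sck^2_d(p) \subset \locfreelocus^2_d(p)$ is an open subscheme and $\spec R$ has only these two points, the whole map $\spec R \to \locfreelocus^2_d(p)$ factors through $\sck^2_d(p)$. This produces the extension $h : \spec R \to \sck^2_d(p) = T \times_{\mg}\scp_d$, and uniqueness follows from \autoref{proposition:monomorphism} applied to $\tau$. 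The main obstacle is the delicate interplay in the previous paragraph, where one must simultaneously extend the line bundle via $\pic^d_{C/R}$, match it on the special fiber using the uniqueness of $g^2_d$, and promote the pointwise data to the correct scheme-theoretic structure via cohomology and base change; everything else is formal once this is in place.
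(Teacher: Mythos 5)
Your proposal is correct and follows the same overall strategy as the paper (reduce to $\tau$, invoke Mochizuki's criterion with \autoref{proposition:monomorphism}, extend the line bundle over $\spec R$, match it on the special fiber via the uniqueness of the $g^2_d$, then use cohomology and base change and check the embedding condition fiberwise). The two places where you genuinely diverge are both fine and arguably cleaner. First, for the extension of $\scl_K$ across $\spec R$, you use that $\pic^d_{C/R}$ is proper (a torsor under the abelian scheme $\pic^0_{C/R}$) together with the section to upgrade the $R$-point to an honest line bundle; the paper instead uses that $C_R$ is regular (since $R$ is), writes $\scl_K = \sco_{C_K}(D_K)$, and takes $\scl = \sco_{C_R}(D_R)$ for $D_R$ the closure of $D_K$. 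Your route leans on representability and properness of the relative Picard scheme (available here because the section exists), while the paper's is more elementary, trading that input for regularity of the total space. Second, you get uniqueness of $h$ directly from the monomorphism property of $\tau$, which is immediate and slightly more economical than the paper's appeal to separatedness of $\scp_d \to \mg$ via the valuative criterion for separatedness. Your final step --- that a map from the two-point scheme $\spec R$ into $\locfreelocus^2_d(p)$ whose image meets the open subscheme $\sck^2_d(p)$ at both points must factor through it --- is also a valid (and slightly slicker) replacement for the paper's fiberwise verification that $C \to \bp(f_*\scl)$ is a closed embedding.
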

\begin{proof}
	First, it suffices to check after pull back to an \'etale cover of $\mg$, and hence it suffices to check the map $\tau$
	defined in \eqref{equation:tau-definition} is a locally closed embedding.
	For this, by \autoref{lemma:functorial-criterion-for-locally-closed-embedding}, it suffices to verify
	the map $\tau$ is a monomorphism (which follows from
	\autoref{proposition:monomorphism})
	and
	the valuative criterion for locally closed embeddings.
To apply 
	\autoref{lemma:functorial-criterion-for-locally-closed-embedding}
we are using that $\scp_d$ and $\mg$ are both finite type over $\spec \bz$.
We note that 
	$\scp_d$ is finite type over $\mathbb Z$ by 
\autoref{proposition:plane-curves-to-pd}.	

	It only remains to verify the valuative criterion
	for $\tau$ being a locally closed embedding.
	Retaining the notation of \autoref{lemma:functorial-criterion-for-locally-closed-embedding},
	the valuative criterion
	can be rephrased in the following way:

	Let $f: C_R \to \spec R$ be a smooth proper curve
whose generic and special fibers
$C_K$ and $C_\kappa$
are geometrically connected genus $g$ plane curves of degree $d$ with $g =
\binom{d-1}{2}$.
	Assume the maps $C_K \to \bp^2_K$ and $C_\kappa \to \bp^2_\kappa$ are given by invertible sheaves $\scl_K$ and $\scl_\kappa$.
	We want to show there exists a unique invertible sheaf $\scl$ of degree $d$ on $C_R$ so that 
	$\scl|_{C_K}= \scl_K$ and $\scl_{C_\kappa} = \scl_\kappa$ with 
so that $f_* \scl$ is locally free of rank $3$ and $f^* f_* \scl \to \scl$ induces an embedding $C_R \to \bp(f_*
\scl)$, restricting to the given embeddings of the generic and special fibers.

	We first show $\scl$ is unique.
	Note that $\scp_d$ is separated over $\spec \mathbb Z$ using the valuative criterion for
	separatedness and the
	equivalence between $\scp_d$ and $\sck^2_d$ from
	\autoref{proposition:fiber-product-is-scheme}, since two closed
	embeddings of $C_R$ into $\mathbb P^2_R$ which have the same
	restriction to the generic fiber must agree.
	Since $\mg$ is also separated over $\spec \mathbb Z$,
	we obtain that the map 
$\scp_d \ra \mg$ is separated.
Then, the valuative criterion for separatedness applied to the map $\scp_d \to
\mg$ implies that $\scl$ is unique.

	Therefore, it suffices to show there exists a morphism $\spec R \ra \scp_d$, restricting to the given maps from $\spec K$ and $\spec \kappa$, which we now construct.
	Since $R$ is regular, it follows $C_R$ is regular.
	Therefore, the natural map from Weil divisors to Cartier divisors is an isomorphism, and hence
	$\scl_K \cong \sco_{C_k}(D_K)$ for some Weil divisor $D_K \subset C_K$.
	Let $D_R$ denote the closure of $D_K$ inside $C_R$.
	Let $\scl := \sco_{C_R}(D_R)$.
	By construction, we know $\scl|_{C_K} \cong \scl_K$.

	Next, we verify $\scl|_{C_\kappa} \cong \scl_\kappa$, that $f_* \scl$ is
	locally free of rank $3$ and the formation of $f_* \scl$ commutes with
	base change on $\spec R$.
	Indeed, since $f: C_R \ra R$ is a proper morphism of Noetherian schemes and $\scl$ is flat over $R$,
	we obtain that the map $q \mapsto h^0(C_q, \scl|_{C_q})$ is upper semicontinuous.
	In particular, since $h^0(C_K, \scl|_{C_K}) = 3$,
	we obtain $h^0(C_\kappa, \scl|_{C_\kappa}) \geq 3$.
It follows from \autoref{lemma:low-degree-g1m}
	that $h^0(C_\kappa, \scl|_{C_\kappa}) = 3$ and moreover from
	\autoref{proposition:unique-g2d} that 
	$\scl|_{C_\kappa} \simeq \scl_\kappa$.
	Grauert's theorem then implies that $f_* \scl$ is locally free of rank
	$3$ and that the formation of $f_* \scl$ commutes with base change on
	$\spec R$. 

	To conclude, we wish to show $f^* f_* \scl \to \scl$ is surjective and
	induces an embedding $C \to \bp(f_* \scl)$. Since $f_* \scl$ commutes
	with base change, we can verify this on each fiber, in which case it
	holds from the assumptions that each fiber was a plane curve, with
	embedding induced by the restriction of $\scl$.
\end{proof}

\section{Acknowledgements}

I thank David Zureick-Brown for running the REU project which originally prompted this question.
I thank Maksym Fedorchuk for 
suggesting the method
of showing $\scp_d \ra \mg$ is a locally closed embedding 
by showing $\scp_d \ra \scg^2_d$ is an open embedding 
and then showing $\scg^2_d \ra \mg$ is a locally
closed embedding.
I thank Anand Patel and Joe Harris for explaining why smooth plane curves have a unique $g^2_d$ and suggesting other methods of approaching this question.
I thank Brian Conrad for pointing out the useful valuative criterion for being a locally closed embedding. I thank Ravi Vakil and Michael Kemeny for listening to my argument in detail. I also thank 
Tony Feng,
Ben Lim, 
Arpon Raksit, 
Zev Rosengarten,
Bogdan Zavyalov,
and Yang Zhou
for helpful discussions.
This material is based upon work supported by the National Science Foundation Graduate Research Fellowship Program under Grant No. DGE-1656518.

\bibliographystyle{alpha}
\bibliography{/home/aaron/Dropbox/master}

\end{document}